\theoremstyle{plain}
\newtheorem{thm}{Theorem}[subsection]
\newtheorem{pb}{Problem}
\newtheorem{lem}[thm]{Lemma}
\newtheorem{cor}[thm]{Corollary}
\newtheorem{prop}[thm]{Proposition}
\theoremstyle{definition}
\newtheorem{defn}[thm]{Definition}
\newtheorem{rem}[thm]{Remark}
\newtheorem{ex}[thm]{Example}
\let\ssection=\section
\renewcommand{\section}{\setcounter{equation}{0}\ssection}
\newcommand{\R}{\mathbb{R}}
\newcommand{\Z}{\mathbb{Z}}
\newcommand{\C}{\mathbb{C}}
\newcommand{\pP}{{\mathbb{P}}}
\newcommand{\E}{\mathcal{E}}
\newcommand{\cL}{\mathcal{L}}
\newcommand{\Rc}{\mathcal{R}}
\newcommand{\gM}{\mathfrak{M}}
\newcommand{\gT}{\mathfrak{T}}
\newcommand{\id}{\textup{Id}}
\newcommand{\Id}{\mathrm{Id}}
\newcommand{\SL}{\mathrm{SL}}
\newcommand{\OSp}{\mathrm{OSp}}
\def\a{\alpha}
\def\b{\beta}
\def\d{\delta}
\def\g{\gamma}
\def\L{\Lambda}
\def\om{\omega}
\def\t{\tau}
\def\l{\lambda}
\def\m{\mu}
\begin{document}

\title[Supersymmetric frieze patterns and linear difference operators]{Introducing supersymmetric frieze patterns\\ and linear difference operators}

\author[S. Morier-Genoud, V. Ovsienko, S. Tabachnikov]{Sophie Morier-Genoud, Valentin Ovsienko and Serge Tabachnikov
\\
{\tiny with an Appendix by}
Alexey Ustinov}

\address{Sophie Morier-Genoud,
Sorbonne Universit\'es, UPMC Univ Paris 06, UMR 7586, 
Institut de Math\'ematiques de Jussieu- Paris Rive Gauche, 
Case 247, 4 place Jussieu, F-75005, Paris, France
}

\address{
Valentin Ovsienko,
CNRS,
Laboratoire de Math\'ematiques 
U.F.R. Sciences Exactes et Naturelles 
Moulin de la Housse - BP 1039 
51687 REIMS cedex 2,
France}

\address{
Serge Tabachnikov,
Pennsylvania State University,
Department of Mathematics,
University Park, PA 16802, USA,
and 
ICERM, Brown University, Box1995,
Providence, RI 02912, USA
}

\address{
Alexey Ustinov,
Institute of Applied Mathematics,
Khabarovsk Division, Russian Academy of Sciences,
54 Dzerzhinsky Street, Khabarovsk, 680000, Russia}

\email{sophie.morier-genoud@imj-prg.fr,
valentin.ovsienko@univ-reims.fr,
tabachni@math.psu.edu,
ustinov@iam.khv.ru
}

\keywords{Supercommutative algebra, Frieze pattern, Difference equation, Cluster algebra}


\begin{abstract}
We introduce a supersymmetric analog of the classical Coxeter frieze patterns.
Our approach is based on the relation with linear difference operators.
We define supersymmetric analogs of linear difference operators called Hill's operators.
The space of these ``superfriezes'' is an algebraic supervariety,
isomorphic to the space of supersymmetric second order difference equations,
called Hill's equations.
\end{abstract}

\maketitle

\tableofcontents

\section*{Introduction}

Frieze patterns were introduced by Coxeter~\cite{Cox},
and studied by Coxeter and Conway~\cite{CoCo}.
Frieze patterns are closely related to
classical notions of number theory,
such as continued fractions, Farey series, as well as the Catalan numbers.
Recently friezes have attracted much interest,
mainly because of their deep relation to the theory of cluster algebras
developed by Fomin and Zelevinsky, see~\cite{FZ1}--\cite{FZ4}.
This relation was pointed out in~\cite{FZ2} and developed in~\cite{CaCh}.
Generalized frieze patterns were defined in~\cite{ARS}.
Further relations to moduli spaces of configurations of points
in projective spaces and linear difference equations were studied in~\cite{MGOT,SVRS}.

The main goal of this paper is to study superanalogs of Coxeter's frieze patterns.
We believe that ``superfriezes'' introduced in this paper provide us with a first example of
cluster superalgebra.
We hope to investigate this notion in a more general setting elsewhere.

Our approach to friezes uses the connection with linear difference equations.
The discrete Sturm-Liouville (one-dimensional Schr\"odinger) equation is a second order
equation of the form:
$$
V_i=a_iV_{i-1}-V_{i-2},
$$
where the sequence $(V_i)$ is unknown, and where
the potential (or coefficient) $(a_i)$ is a given sequence.
Importance of linear difference equations is due to the fact that
many classical sequences of 
numbers, orthogonal polynomials, special functions, etc.
satisfy such equations.
Linear difference equations with periodic coefficients, i.e., $a_{i+n}=a_i$,
were recently used to study
discrete integrable systems related to cluster algebras, see \cite{OST,MGOT,SVRS,Kr}.
It turns out that one particular case, where
all the solutions of the above equation are
antiperiodic: 
$$
V_{i+n}=-V_i
$$ 
are of a special interest.
We call this special class of discrete Sturm-Liouville equations Hill's equations.
They form an algebraic variety which is isomorphic to the space
of Coxeter's friezes.

We understand a frieze pattern as just another way to represent
the corresponding Hill equation.
Roughly speaking, a frieze is a way to write potential and solutions
of a difference equation in the same infinite matrix.
Friezes provide a very natural coordinate system of the space
of Hill's equations that defines a structure of cluster manifold.

To the best of our knowledge, supersymmetric analogs of
difference equations have never been studied.
We introduce a class of supersymmetric difference equations
that are analogous to Hill's (or Sturm-Liouville, one-dimensional Schr\"odinger) equations.
We show that these difference equations can be identified with
superfriezes. 
The main ingredient of difference equations we consider is the
{\it shift operator} acting on sequences.
In the classical case, the shift operator is the linear operator $T$ 
defined by $(TV)_i=V_{i-1}$, discretizing the translation vector field $\frac{d}{dx}$.
We define a supersymmetric version of $T$,
as a linear operator $\gT$ acting on pairs of sequences
and satisfying $\gT^2=-T$.
This operator is a discretization of the famous odd supersymmetric vector field
$D=\partial_\xi-\xi\partial_x$.
The corresponding ``superfriezes'' are constructed with the help of
modified Coxeter's frieze rule where $\SL_2$ is replaced with
the supergroup $\OSp(1|2)$.

Discrete Sturm-Liouville equations with periodic potential
can be understood as discretization of the Virasoro algebra.
Two different superanalogs of the Virasoro algebra are known as
Neveu-Schwarz and Ramond algebras.
The first one is defined on the supercircle $S^{1|1}$ related to
the trivial $1$-dimensional bundle over $S^1$, while the second one
is associated with the twisted supercircle $S_+^{1|1}$
related to the M\"obius bundle.
The supersymmetric version we consider is the M\"obius (or Ramond) one.

The main results of the paper are Theorems~\ref{Glade} and~\ref{ISOMTH}.
The first theorem describes the main properties of superfriezes that are
very similar to those of Coxeter's friezes.
The second theorem identifies the spaces of superfriezes and Hill's equations.

The paper consists in three main sections.

In Section~\ref{LDOS}, we consider
supersymmetric difference operators.
The space of such operators with (anti)periodic solutions that we call Hill's operators
is an algebraic supervariety.

In Section~\ref{SFS}, we introduce analogs of Coxeter's friezes
in the supercase.
We establish the glide symmetry and periodicity of generic superfriezes.
We prove that the space of superfriezes is an algebraic supervariety
isomorphic to that of Hill's operators.
We give a simple direct proof of the Laurent phenomenon 
occurring in superfriezes.

Each of these main section includes a short introduction outlining the main features
of the respective classical theory.

Finally, in Section~\ref{OPSEc}, we formulate and discuss some of the open problems.

The space of Coxeter's frieze patterns is a cluster variety
associated to a Dynkin graph of type~$A$, see~\cite{CaCh}.
Frieze patterns can be taken as the basic class of cluster algebras
which explains the exchange relations and the mutation rules.

\section{Supersymmetric linear difference operators}\label{LDOS}

In this section, we introduce supersymmetric linear finite difference
operators and the corresponding linear finite difference equations,
generalizing the classical difference operators and
difference equations.

The difference operators are defined using the supersymmetric shift operator.

\subsection{Classical discrete Sturm-Liouville and Hill's operators}\label{ClSec}

We start with a brief reminder of well-known 
second order operators.

The Sturm-Liouville operator 
(also known as discrete one-dimensional Schr\"odinger operators or Hill's operators)
is a linear differential operator 
$$
\left(\frac{d}{dx}\right)^2+u(x)
$$
acting on functions in one variable.

The discrete version of the Sturm-Liouville operator
is the following linear operator
$$
L=T^2-a\,T+\id,
$$
acting on infinite sequences $V=(V_i)$,
where $i\in\Z$ and $T$ is the {\it shift operator} 
$$
(TV)_i=V_{i-1},
$$
and where $a=(a_i)$ is a given infinite sequence
called the {\it coefficient}, or {\it potential} of the operator.
The coefficient generates a diagonal operator, i.e., $(aV)_i=a_iV_i$.
The sequence $(a_i)$ is usually taken with values in $\R$, or $\C$.

Given an operator $L$, one can define
the corresponding linear recurrence equation $L(V)=0$,
that reads:
\begin{equation}
\label{SLEq}
V_i=a_iV_{i-1}-V_{i-2},
\end{equation}
for all $i\in\Z$.
The sequence $(V_i)$ is a {\it variable}, or {\it solution} of the equation.

The spectral theory of linear difference operators was extensively studied; 
see~\cite{Kr,KN} and references therein.
The importance of second order operators and equations is due to the fact
that many sequences of numbers and special functions satisfy such equations.

We will impose the following two conditions: 

\begin{enumerate}
\item[(a)]
the potential of the operator is $n$-periodic, i.e.,
$a_{i+n}=a_i$;
\item[(b)]
{\it all} solutions of the equation~(\ref{SLEq}) are $n$-antiperiodic:
$$
V_{i+n}=-V_i.
$$
\end{enumerate}

The condition (a) implies the existence of a {\it monodromy operator} $M\in\SL_2$
(defined up to conjugation).
The space of solutions of the equation~(\ref{SLEq}) is 2-dimensional;
the monodromy operator is defined as the action of the operator of shift by the period, 
$T^n$, to the space of solutions.
The condition (b) means that the monodromy operator
is:
$$
M=
\left(
\begin{array}{rr}
-1&0\\[4pt]
0&-1
\end{array}
\right).
$$
Note that condition (b) implies condition (a),
since the coefficients can be recovered from the solutions.
Any Sturm-Liouville operator satisfying conditions (a) and (b) will be called
{\it Hill's operator}.

The following statement is almost obvious; for a more general discussion, see~\cite{SVRS}.

\begin{prop}
\label{ClSLProp}
The space of Hill's operators 
is an algebraic variety of dimension $n-3$.
\end{prop}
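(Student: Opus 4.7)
The plan is to realize the space of Hill's operators as a fiber of an algebraic monodromy map, and to exhibit a birational parametrization of this fiber by $\C^{n-3}$. The space of all $n$-periodic Sturm-Liouville operators is identified with $\C^n$ via $L \leftrightarrow (a_1,\dots,a_n)$, and the monodromy is the ordered product
\[
M(a) \;=\; A_n A_{n-1} \cdots A_1, \qquad A_i \;=\; \begin{pmatrix} 0 & 1 \\ -1 & a_i \end{pmatrix} \in \SL_2,
\]
so $M \colon \C^n \to \SL_2$ is a morphism of affine algebraic varieties. Condition~(b) is precisely $M(a) = -\Id$, since the monodromy must act as $-1$ on the two-dimensional space of solutions. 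Thus the space of Hill's operators is the closed subvariety $\Sc = M^{-1}(-\Id) \subset \C^n$, and because $\SL_2$ is smooth of dimension~$3$ this fiber is cut out by three independent equations, giving $\dim \Sc \geq n-3$.

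For the matching upper bound I would consider the projection $\pi \colon \Sc \to \C^{n-3}$ onto the first $n-3$ coordinates. Given $(a_1,\dots,a_{n-3})$, the Hill condition $M(a) = -\Id$ amounts to
\[
A_n(a_n)\, A_{n-1}(a_{n-1})\, A_{n-2}(a_{n-2}) \;=\; -(A_{n-3}\cdots A_1)^{-1} \;\in\; \SL_2,
\]
i.e., to inverting the three-factor map $(a,b,c) \mapsto A(c)A(b)A(a)$ from $\C^3$ to $\SL_2$, where $A(t) = \bigl(\begin{smallmatrix} 0 & 1 \\ -1 & t\end{smallmatrix}\bigr)$. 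A short direct computation shows this map is a birational isomorphism onto the Zariski-open locus $\{B_{11}\neq 0\} \subset \SL_2$, with rational inverse in the entries of $B$. Hence $\pi$ is birational, $\dim \Sc \leq n-3$, and combined with the previous paragraph we obtain $\dim \Sc = n-3$.

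The main, and essentially only, technical point concerns the closed locus where $(A_{n-3}\cdots A_1)^{-1}$ has vanishing $(1,1)$-entry: one must verify that this degenerate subset contributes no extra irreducible components of $\Sc$ of dimension exceeding $n-3$. This reduces to a short case analysis of the three-factor factorization $A(c)A(b)A(a) = B$ when $B_{11} = 0$, which is readily handled by inspection and completes the dimension count.
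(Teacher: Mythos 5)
Your argument is correct, and it starts from the same observation as the paper --- the space of Hill's operators is the fiber $M^{-1}(-\Id)$ of the polynomial monodromy map $M\colon\C^n\to\SL_2$ --- but it goes further: the paper's proof simply asserts that the condition $M=-\Id$ has codimension $3$, whereas you actually prove the dimension count. Your lower bound $\dim\Sc\geq n-3$ follows since the fiber is locally cut out by three equations (pulling back local coordinates at $-\Id$ on the smooth $3$-dimensional $\SL_2$), and your upper bound comes from the explicit birational section: since $A(c)A(b)A(a)$ has entries $-b,\ ab-1$ in the first row and $1-bc,\ abc-a-c$ in the second, the triple $(a,b,c)$ is recovered rationally from $B=A(c)A(b)A(a)$ exactly on the locus $B_{11}\neq 0$, so for generic $(a_1,\dots,a_{n-3})$ the remaining coefficients $(a_{n-2},a_{n-1},a_n)$ are uniquely determined by $A(a_n)A(a_{n-1})A(a_{n-2})=-(A_{n-3}\cdots A_1)^{-1}$; this also gives nonemptiness of the fiber, which the paper leaves implicit. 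The one step you leave to ``inspection'' does check out and is worth a sentence: on the degenerate stratum one has $B_{11}=-a_{n-1}=0$, and then the factorization exists only if $B_{12}=-1$, $B_{21}=1$, with fiber the line $a_{n-1}=0$, $a_{n-2}+a_n=-B_{22}$; since $B_{11}$ is, up to sign, the continuant $K(a_1,\dots,a_{n-3})$ (a nonzero polynomial), this stratum lies over a proper closed subset of $\C^{n-3}$ and has dimension at most $(n-4)+1=n-3$, so it contributes no component of larger dimension. In short, your proof is a complete and more rigorous version of the paper's one-line argument, with the added benefit of exhibiting the variety as birationally parametrized by the first $n-3$ coefficients.
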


\noindent
Indeed, the coefficients of the monodromy operator are polynomials in $a_i$'s,
and the condition $M=-\Id$ implies that the codimension is $3$.

It turns out that the algebraic variety of Hill's operators has a geometric meaning. 
Consider the {\it moduli space} of configurations of $n$ points
in the projective line, i.e., the space of $n$-tuples of points:
$$
\{v_1,\ldots,v_n\}\subset\pP^1,
\qquad
v_{i+1}\not=v_i
$$
modulo the action of $\SL_2$.
We will use the cyclic order, so that $v_{i+n}=v_i$, for all $i\in\Z$.
This space will be denoted by $\widehat{\mathcal{M}}_{0,n}$.
Note that this space is 
slightly bigger than the classical space~$\mathcal{M}_{0,n}$,
which is the configuration moduli space of $n$ {\it distinct} points in $\pP^1$.
The following statement is a particular case of a theorem proved in~~\cite{SVRS}.

\begin{thm}
\label{ClSLPropDva}
If $n$ is odd, then the algebraic variety  of Hill's operators is isomorphic to~$\widehat{\mathcal{M}}_{0,n}$.
\end{thm}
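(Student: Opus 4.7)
\medskip

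\noindent\textbf{Proof plan.} The plan is to construct mutually inverse maps between the variety of Hill's operators and $\widehat{\mathcal{M}}_{0,n}$, and the basic idea is the well-known correspondence between a second-order linear equation and the projective curve traced by the ratio of two independent solutions.

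\emph{Forward map.} Given a Hill's operator $L=T^2-aT+\mathrm{Id}$, the solution space of $LV=0$ is $2$-dimensional; pick a basis $(V,W)$ and set
\[
v_i=(V_i:W_i)\in\pP^1.
\]
A change of basis acts on $(V_i,W_i)^t$ by an element of $\SL_2$ (the determinant is constant along the sequence because of the recurrence), so the configuration $\{v_i\}$ is well defined modulo $\SL_2$. Antiperiodicity $V_{i+n}=-V_i$, $W_{i+n}=-W_i$ yields $v_{i+n}=v_i$, and $v_i\neq v_{i-1}$ because an equality would force $V,W$ to be proportional via the recurrence (two consecutive values determine a solution). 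This produces a point of $\widehat{\mathcal{M}}_{0,n}$.

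\emph{Backward map, where oddness of $n$ enters.} Conversely, given a cyclic configuration $\{v_i\}$ with $v_i\neq v_{i-1}$, I lift each $v_i$ to a vector $V_i\in\Ab^2\setminus\{0\}$. The distinctness condition allows one to normalize the lift so that
\[
V_i\wedge V_{i-1}=1\qquad\text{for all }i,
\]
which determines $V_i$ uniquely up to the transformation $V_i\mapsto c^{(-1)^i}V_i$ (an overall, parity-dependent, rescaling preserving the wedge). Since $v_{i+n}=v_i$, we have $V_{i+n}=\lambda_iV_i$ for some scalars; the normalization gives $\lambda_i\lambda_{i-1}=1$, so $\lambda_i$ depends only on the parity of~$i$, with product~$1$. \textbf{This is where $n$ odd is used:} because $n$ is odd, $i$ and $i+n$ have opposite parities, and the ambiguity $c^{(-1)^i}$ in the lift can be solved to force $\lambda_i=-1$ for all $i$ (concretely, one imposes $c^2=-\lambda_{\text{even}}$, which has a solution over $\C$). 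For $n$ even this rescaling fails and a nontrivial invariant obstructs antiperiodicity — this is exactly why the statement is restricted to odd~$n$.

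\emph{Recovering the Hill operator.} Once the lift satisfies $V_i\wedge V_{i-1}=1$ and $V_{i+n}=-V_i$, the three vectors $V_{i-2},V_{i-1},V_i$ live in the plane, so $V_i=a_iV_{i-1}+b_iV_{i-2}$. Taking wedge with $V_{i-1}$ gives $b_i=-1$, and taking wedge with $V_{i-2}$ gives $a_i=V_i\wedge V_{i-2}$. Antiperiodicity of $V$ implies $a_{i+n}=(-V_i)\wedge(-V_{i-2})=a_i$, so $a$ is $n$-periodic; the two coordinate sequences of $V_i$ are two independent antiperiodic solutions, so $L=T^2-aT+\mathrm{Id}$ is a Hill operator. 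The construction is $\SL_2$-equivariant (acting on the chosen lift), hence descends to a well-defined map from $\widehat{\mathcal{M}}_{0,n}$ to Hill's operators.

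\emph{Mutual inverseness.} Finally, I would check that the two constructions are inverse: starting from $L$, the coordinates of solutions $V,W$ assemble into the vector $V_i=(V_i,W_i)^t$ which already satisfies the recurrence and, after normalizing by a constant Wronskian, reproduces the same $a_i$; starting from a configuration, the ratio of coordinates of the lifted $V_i$ returns the $v_i$. The delicate point — and the only place the proof is not formal — is the parity argument that secures the backward map in the odd case; everything else is essentially linear algebra plus the wedge-product normalization already alluded to in \cite{SVRS}.
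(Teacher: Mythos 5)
Your proposal is correct and follows essentially the same route as the paper: the forward map (projectivizing a basis of solutions, well defined modulo the choice of basis) is exactly the paper's sketch, and your backward construction — lifting the configuration with the normalization $V_i\wedge V_{i-1}=1$, observing the residual ambiguity $V_i\mapsto c^{(-1)^i}V_i$, and using the opposite parities of $i$ and $i+n$ for odd $n$ to force $V_{i+n}=-V_i$ — is precisely the detail the paper defers to~\cite{SVRS}. The only caveat is that solving $c^2=-\lambda$ requires the ground field to contain the relevant square root (automatic over $\C$, the natural setting for the ``algebraic variety'' statement), which you correctly flag.
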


\noindent
For a detailed proof of this statement,
see~\cite{SVRS}.
The idea is as follows.
Given Hill's operator, 
choose arbitrary basis of two linearly independent solutions, $V^{(1)}$ and $V^{(2)}$.
One then defines a configuration of $n$ points in~$\pP^1$
taking for every $i\in\Z$ 
$$
v_i=(V_i^{(1)}:V_i^{(2)}).
$$
This $n$-tuple of points is defined modulo linear-fractional transformations
(homographies)
corresponding to the choice of the basis of solutions.

\subsection{Supersymmetric shift operator}
Let $\Rc=\Rc_{\bar0}\oplus\Rc_{\bar1}$ be an arbitrary supercommutative ring,
and $\widehat \Rc=\Rc\oplus \xi\Rc$ its extension, where $\xi$ is an odd variable.

We will be considering infinite sequences
$$
V+\xi{}W:=(V_i+\xi{}W_i),
\qquad{}i\in\Z,
$$
where $V_i,W_i\in\Rc$.
The above sequence is {\it homogeneous} if $V_i$ and $W_i$
 are homogeneous elements of~$\Rc$ with opposite parity.

\begin{defn}
The supersymmetric shift operator is the linear operator
\begin{equation}
\label{OperT}
\gT=\frac{\partial}{\partial{}\xi}-\xi\,T,
\end{equation}
where $T$ is the usual shift operator.
More explicitly, 
the action of $\gT$ on sequences is given by
$$
\gT\left(V+\xi{}W\right)_i=W_i-\xi{}V_{i-1}.
$$
\end{defn}

\begin{rem}
The operator $\gT$ can be viewed as a discrete version (or ``exponential'')
of the odd vector field 
$$
D=\frac{\partial}{\partial{}\xi}-\xi\frac{\partial}{\partial{}x},
$$
satisfying $D^2=-\frac{\partial}{\partial{}x}$.
This vector field is sometimes called the ``SUSY-structure'', or the contact structure
in dimension $1|1$; for more details, see~\cite{Man,Lei,LeiR}.
The vector field $D$ is characterized as the unique odd left-invariant vector field
on the abelian supergroup $\R^{1|1}$, see Appendix.
We believe that the operator $\gT$ is a natural 
discrete analog of $D$ because of the following properties
(that can be checked directly):

\begin{enumerate}
\item[(i)]
One has $\gT^2=-T$.

\item[(ii)]
The operator $\gT$ is equivariant
with respect to the following action of $\Z\oplus\Rc_{\bar1}$
on sequences in $\widehat \Rc$:
$$
(k,\l):\left(V+\xi{}W\right)_i\longmapsto
V_{i+k}-\l{}W_{i+k}+\xi\left(\l{}V_{i+k-1}+W_{i+k}\right),
$$
which is a discrete version of the (left) action of the supergroup $\R^{1|1}$ on itself, see Appendix.
\end{enumerate}
\end{rem}

\noindent
It is natural to say that $\gT$ 
is a difference operator of {\it order}~$\frac{1}{2}$.

\subsection{Supersymmetric discrete Sturm-Liouville operators}

We introduce a new notion of
{\it supersymmetric discrete Sturm-Liouville operator}
(or one-dimensional Schr\"odinger operator),
and the corresponding recurrence equations. 

\begin{defn}
The supersymmetric discrete Sturm-Liouville
operator with potential $U$ is
the following odd linear difference operator of order~$\frac{3}{2}$:
\begin{equation}
\label{SuperSL}
\cL=\gT^3+U\gT^2+\Pi,
\end{equation}
where $U$ is a given odd sequence:
$$
U_i=\b_i+\xi{}a_i,
$$
with $a_i\in\Rc_{\bar0},\;\b_i\in\Rc_{\bar1}$,
and where $\Pi$ is the standard {\it parity inverting} operator:
$$
\Pi\left(V+\xi{}W\right)_i=W_i+\xi{}V_i.
$$
\end{defn}

More explicitly, the operator $\cL$ acts on sequences as follows
$$
\begin{array}{rcl}
\cL\left(V+\xi{}W\right)_i&=&
W_i-W_{i-1}-\b_iV_{i-1}\\[6pt]
&&\displaystyle
+\xi\left(V_i-a_iV_{i-1}+V_{i-2}+\b_iW_{i-1}\right).
\end{array}
$$

The corresponding linear recurrence equation $\cL\left(V+\xi{}W\right)=0$
is the following system:
$$
\left\{
\begin{array}{rcl}
V_i&=&a_iV_{i-1}-V_{i-2}-\b_iW_{i-1},\\[4pt]
W_i&=&W_{i-1}+\b_iV_{i-1},
\end{array}
\right.
$$
for all $i\in\Z$.
It can be written in the matrix form:
\begin{equation}
\label{SuperHE}
\left(
\begin{array}{l}
V_{i-1}\\[4pt]
V_i\\[4pt]
W_i
\end{array}
\right)=
A_i\left(
\begin{array}{l}
V_{i-2}\\[4pt]
V_{i-1}\\[4pt]
W_{i-1}
\end{array}
\right),
\qquad
\hbox{where}
\qquad
A_i=
\left(
\begin{array}{cc|c}
0&1&0\\[4pt]
-1&a_i&-\b_i\\[4pt]
\hline
0&\b_i&1
\end{array}
\right).
\end{equation}
This is a supersymmetric analog of the equation~(\ref{SLEq}).
It is easy to check that the matrix in the right-hand-side
belongs to the supergroup $\OSp(1|2)$;
see Appendix.

\begin{rem}
The continuous limit of the operator~(\ref{SuperSL}) is
the well-known supersymmetric 
Sturm-Liouville Operator:
$$
D^3+U(x,\xi),
$$
considered by many authors, see, e.g.,~\cite{Rad,GT}.
This differential operator is self-adjoint with respect to the Berezin integration.
It is related to the coadjoint representation
of the Neveu-Schwarz and Ramond superanalogs of the Virasoro algebra;
see~\cite{DMV}.

More precisely, $U(x,\xi)=U_1(x)+\xi{}U_0(x)$,
and in the Neveu-Schwarz case the  function is periodic:
$U_0(x+2\pi)=U_0(x)$ and $U_1(x+2\pi)=U_1(x)$,
while in the Ramond case it is (anti)periodic:
$$
U_0(x+2\pi)=U_0(x),
\qquad U_1(x+2\pi)=-U_1(x).
$$
This corresponds to two different versions of the supercircle,
the one related to the trivial bundle over $S^1$,
and the second one related to the M\"obius bundle.
\end{rem}

\subsection{Supersymmetric Hill equations, monodromy and
 supervariety $\E_n$}

We will always assume the following periodicity condition on the
coefficients of the Sturm-Liouville operator:
\begin{equation}
\label{SPerEq}
a_{i+n}=a_i,
\qquad
\b_{i+n}=-\b_i.
\end{equation}
Periodicity of coefficients does not, of course, imply 
periodicity or antiperiodicity of solutions. 
Any such equation has a {\it monodromy operator}, acting on the space of solutions
$$
\left(
\begin{array}{l}
V_{i+n-1}\\[4pt]
V_{i+n}\\[4pt]
W_{i+n}
\end{array}
\right)=
M
\left(
\begin{array}{l}
V_{i-1}\\[4pt]
V_{i}\\[4pt]
W_{i}
\end{array}
\right).
$$
This operator can be represented as a matrix $M_i$
which is a product of $n$ consecutive matrices:
\begin{equation}
\label{MonEq}
M_i=A_{i+n-1}A_{i+n-2}\cdots{}A_{i+1}A_i,
\end{equation}
where $A_i$ is the matrix 
of the system~(\ref{SuperHE}), and therefore $M_i\in\OSp(1|2)$.

\begin{defn}
\label{DefHEq}
A {\it supersymmetric Hill equation} is the equation
(\ref{SuperHE}) such that
all its solutions $V+\xi{}W=(V_i+\xi{}W_i),\;i\in\Z$ satisfy
the following (anti)periodicity condition:
\begin{equation}
\label{PeriodSol}
V_{i+n}=-V_i,
\qquad
W_{i+n}=W_i,
\end{equation}
for all $i\in\Z$.
\end{defn}

Since the space of solutions has dimension $2|1$,
the condition~\eqref{PeriodSol} is equivalent to the fact that the monodromy matrix of such equation is:
\begin{equation}
\label{MonCond}
M_i=\left(
\begin{array}{rr|c}
-1&0&\;\;0\\[4pt]
0&-1&\;\;0\\[4pt]
\hline
0&0&\;\;1
\end{array}
\right).
\end{equation}

Remarkably enough, the condition~\eqref{MonCond} does not depend on the choice
of the initial $i$.

\begin{lem}
\label{MInd}
If the condition~\eqref{MonCond} holds for some $i$, then it holds for all $i\in\Z$.
\end{lem}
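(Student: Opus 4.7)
The plan is a short direct calculation based on the standard recursion for the monodromy. From the definition~(\ref{MonEq}),
$$
M_{i+1}=A_{i+n}A_{i+n-1}\cdots A_{i+1}=A_{i+n}\,\bigl(A_{i+n-1}\cdots A_{i+1}A_i\bigr)A_i^{-1}=A_{i+n}M_iA_i^{-1},
$$
so it suffices to show that $A_{i+n}M_iA_i^{-1}$ equals the prescribed matrix in~(\ref{MonCond}) whenever $M_i$ does. Call that matrix $J$; note $J=J^{-1}$.

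The key step is a conjugation identity: I would check by direct inspection of~(\ref{SuperHE}) that
$$
JA_iJ^{-1}=A_{i+n}
$$
for every $i\in\Z$. Writing out $JA_iJ$ entry by entry, the $2\times 2$ even block is unaffected (signs cancel), while the off-diagonal odd entries $\pm\b_i$ in positions $(2,3)$ and $(3,2)$ each get multiplied by $-1$. By the periodicity hypothesis~(\ref{SPerEq}), $a_{i+n}=a_i$ and $\b_{i+n}=-\b_i$, and the resulting matrix is precisely $A_{i+n}$. In other words, the twisted (anti)periodicity of the odd coefficients $\b_i$ is exactly what is needed to make $A_{i+n}$ a conjugate of $A_i$ by $J$.

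Granting this, assume $M_i=J$. Then
$$
M_{i+1}=A_{i+n}M_iA_i^{-1}=(JA_iJ^{-1})\,J\,A_i^{-1}=J A_i A_i^{-1}=J,
$$
so the condition propagates one step forward. Running the same argument with $A_{i+n}^{-1}=JA_i^{-1}J^{-1}$ (or simply by solving the above recursion backwards) shows that it also propagates one step backward. An immediate induction then yields $M_i=J$ for all $i\in\Z$.

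The only real work is the conjugation identity $JA_iJ^{-1}=A_{i+n}$, which is a routine check but is also the conceptual heart of the statement: the ordinary scalar element $-\id$ that plays this role in the classical case is not central in $\OSp(1|2)$, and it is the twist $\b_{i+n}=-\b_i$ in the periodicity condition~(\ref{SPerEq}) that restores the independence of $M_i$ from the base point~$i$.
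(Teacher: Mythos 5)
Your proof is correct and follows essentially the same route as the paper: both start from the recursion $M_{i+1}=A_{i+n}M_iA_i^{-1}$ coming from~\eqref{MonEq} and~\eqref{SPerEq}, and both reduce the statement to a direct $3\times 3$ matrix check (the paper multiplies out $A_{i+n}\,J\,A_i^{-1}$ explicitly, while you package the same computation as the conjugation identity $JA_iJ^{-1}=A_{i+n}$, which indeed holds). Your explicit treatment of the backward step is a small bonus over the paper's bare appeal to induction, but the argument is the same in substance.
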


\begin{proof}
Let $M_i$ be as in~\eqref{MonCond} for some $i$.
By definition~\eqref{MonEq},
and using (anti)periodicity of the coefficients~\eqref{SPerEq}, we have:
$$
\begin{array}{rcl}
M_{i+1}=A_{i+n}M_iA_i^{-1}&=&
\left(
\begin{array}{cc|c}
0&1&0\\[4pt]
-1&a_i&\b_i\\[4pt]
\hline
0&-\b_i&1
\end{array}
\right)
\left(
\begin{array}{rr|c}
-1&0&\;\;0\\[4pt]
0&-1&\;\;0\\[4pt]
\hline
0&0&\;\;1
\end{array}
\right)
\left(
\begin{array}{cc|c}
a_i&-1&-\b_i\\[4pt]
1&0&0\\[4pt]
\hline
-\b_i&0&1
\end{array}
\right)\\[22pt]
&=&
\left(
\begin{array}{rr|c}
-1&0&\;\;0\\[4pt]
0&-1&\;\;0\\[4pt]
\hline
0&0&\;\;1
\end{array}
\right).
\end{array}
$$
The result follows by induction.
\end{proof}

The condition~\eqref{PeriodSol} is a strong condition on the potential of Hill's equation.
More precisely, one has the following.

\begin{prop}
\label{HillProp}
The space of Hill's equations satisfying the condition~(\ref{PeriodSol})
is an algebraic supervariety of dimension $\left(n-3\right)|\left(n-2\right)$.
\end{prop}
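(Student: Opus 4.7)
The plan is to carry out a direct super-analog of the codimension-count argument used for Proposition~\ref{ClSLProp}. The set of supersymmetric Sturm-Liouville operators whose coefficients satisfy the (anti)periodicity condition~(\ref{SPerEq}) is naturally an affine superspace $\Ab^{n|n}$, with $n$ even coordinates $a_1,\ldots,a_n$ and $n$ odd coordinates $\b_1,\ldots,\b_n$. Hill's locus is then a subvariety cut out by the single matrix equation $M_i=\mathrm{diag}(-1,-1,1)$, well-defined by Lemma~\ref{MInd}.

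Next I would observe that, by formula~(\ref{MonEq}), the entries of $M_i$ are polynomials in the $a_j$'s and $\b_j$'s, and that $M_i$ takes values in the supergroup $\OSp(1|2)$. The target condition is a specific element $M_0:=\mathrm{diag}(-1,-1,1)$ of $\OSp(1|2)$, so the Hill locus is the scheme-theoretic fiber $M^{-1}(M_0)$ of the morphism $M\colon\Ab^{n|n}\to\OSp(1|2)$.

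Since $\dim\OSp(1|2)=3|2$, the equation $M=M_0$ imposes at most $3|2$ independent conditions, so the Hill locus has dimension at least $(n-3)|(n-2)$. To obtain equality, I would verify that the monodromy map is submersive at every Hill operator, which yields the claim via the fiber-dimension formula $(n|n)-(3|2)=(n-3)|(n-2)$.

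The main obstacle is this submersion check. For the even part of the differential, I would specialize $\b_i\equiv 0$: the operator then degenerates to a classical discrete Sturm-Liouville operator, the $2|1$-dimensional space of solutions splits into a $2$-dimensional even part carrying the classical monodromy and a trivial $1$-dimensional odd part, and the surjectivity onto the $3$-dimensional even tangent space $\mathfrak{sl}_2\subset\mathfrak{osp}(1|2)$ at~$M_0$ follows from the classical result underlying Proposition~\ref{ClSLProp} (cf.~\cite{SVRS}). For the odd part, the technically delicate step, I would compute the partial derivatives $\partial M/\partial\b_j$ directly from~(\ref{MonEq})---each such derivative is a sum of products obtained by replacing one factor $A_k$ in the telescoping product by $\partial A_k/\partial\b_k$---and verify that as $j$ ranges over $\{1,\dots,n\}$ these derivatives span the $2$-dimensional odd subspace of $\mathfrak{osp}(1|2)$. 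This is a finite linear-algebra verification in $\mathfrak{osp}(1|2)$; genericity can be ensured, if needed, by noting that the locus where the rank drops is closed and proper, so the generic fiber has the claimed dimension.
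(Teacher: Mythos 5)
Your proposal is correct and takes essentially the same route as the paper: the space of operators with (anti)periodic potential is the affine superspace of dimension $n|n$, the monodromy $M$ depends polynomially on the $a_i$'s and $\b_i$'s and lies in $\OSp(1|2)$, and fixing it to $\mathrm{diag}(-1,-1,1)$ imposes conditions of codimension $3|2=\dim\OSp(1|2)$. The only difference is that the paper simply asserts this codimension count, whereas you sketch (without fully carrying out) a submersivity/rank verification for the monodromy map; that extra step is consistent with the paper's argument and with the explicit low-$n$ computations of Section~\ref{MonEx}, where exactly three even and two odd equations are independent.
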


\begin{proof}
The space of all Hill's equations with arbitrary (anti)periodic potential
is just a vector space of dimensional $n|n$.
The matrix $M$ is given by the product~(\ref{MonEq}), and therefore has
polynomial coefficients in $a$'s and $\b$'s.
Thus, the condition~(\ref{PeriodSol}) defines an algebraic variety.
Furthermore, the condition~(\ref{PeriodSol}) has codimension $3|2$,
i.e., the dimension of~$\OSp(1|2)$.
\end{proof}

We will denote by $\E_n$ the supervariety of Hill's equations
satisfying the condition~(\ref{PeriodSol}).

\begin{rem}
The condition~\eqref{SPerEq} is manifestly a discrete version
of the Ramond superalgebra, i.e., it corresponds to the M\"obius supercircle.
We do not know if the Neveu-Schwarz algebra can be discretized
with the help of linear difference operators.
This case would correspond to periodic sequences $a$'s and $b$'s,
but then the monodromy $M$ would have to be $\pm\Id$.
However, the case $M=\Id$ cannot be related to Coxeter's friezes,
and $M=-\Id$ is not an element of $\OSp(1|2)$.
\end{rem}

\subsection{Supervariety $\E_n$ for small values of $n$}\label{MonEx}

Using the condition~(\ref{MonCond}), one can write down explicitly the
algebraic equations determining the supervariety $\E_n$.
We omit straightforward but long computations.

a)
The supervariety $\E_3$ has dimension $0|1$.
Every Hill's equation satisfying (\ref{PeriodSol}) for $n=3$
has coefficients $a_i\equiv1$, and $\b_i=(-1)^i\b$, for all $i\in\Z$,
where $\b$ is an arbitrary odd variable.
This means that the $\b_i$ satisfy the system:
$$
\left(
\begin{array}{ccc}
0&1&1\\[4pt]
-1&0&1\\[4pt]
-1&-1&0
\end{array}
\right)
\left(
\begin{array}{r}
-\b_3\\[4pt]
\b_1\\[4pt]
\b_2
\end{array}
\right)=0.
$$

b) 
The supervariety $\E_4$ has dimension $1|2$;
the coefficients of the Hill's equation satisfy:
$$
\begin{array}{rcl}
a_1a_2-2+\b_1\b_2&=&0,\\[4pt]
a_2a_3-2+\b_2\b_3&=&0,\\[4pt]
a_3a_4-2+\b_3\b_4&=&0,\\[4pt]
a_4a_1-2+\b_4\b_1&=&0,
\end{array}
\qquad
\hbox{and}
\qquad
\left(
\begin{array}{cccc}
0&1&a_1&1\\[4pt]
-1&0&1&a_2\\[4pt]
-a_1&-1&0&1\\[4pt]
-1&-a_2&-1&0
\end{array}
\right)
\left(
\begin{array}{r}
-\b_4\\[4pt]
\b_1\\[4pt]
\b_2\\[4pt]
\b_3
\end{array}
\right)=0.
$$
The matrix of the linear system has rank $2$.

c)
One can check by a direct computation that
the supervariety $\E_5$ is the $2|3$-dimensional supervariety
defined by the following polynomial equations on $5$ even and $5$ odd variables
$(a_1,\ldots,a_5,\b_1,\ldots,\b_5)$:
$$
\begin{array}{rcl}
a_1a_2-a_4-1+\b_1\b_2&=&0,\\[4pt]
a_2a_3-a_5-1+\b_2\b_3&=&0,\\[4pt]
a_3a_4-a_1-1+\b_3\b_4&=&0,\\[4pt]
a_4a_5-a_2-1+\b_4\b_5&=&0,\\[4pt]
a_5a_1-a_3-1-\b_5\b_1&=&0,
\end{array}
\qquad
\hbox{and}
\qquad
\left(
\begin{array}{ccccc}
0&1&a_1&a_4&1\\[4pt]
-1&0&1&a_2&a_5\\[4pt]
-a_1&-1&0&1&a_3\\[4pt]
-a_4&-a_2&-1&0&1\\[4pt]
-1&-a_5&-a_3&-1&0
\end{array}
\right)
\left(
\begin{array}{r}
-\b_5\\[4pt]
\b_1\\[4pt]
\b_2\\[4pt]
\b_3\\[4pt]
\b_4
\end{array}
\right)=0.
$$
Notice that exactly $3$ even and $2$ odd equations are independent.

d)
The supervariety $\E_6$ is determined by six ``even'' equations
on variables $(a_1,\ldots,a_6,\b_1,\ldots,\b_6)$,
namely:
$$
a_1+a_3+a_5-a_3a_4a_5-a_3\b_4\b_5-a_5\b_3\b_4-\b_3\b_5=0,
$$
and its cyclic permutations, together with the following system
of linear equations: 
$$
\left(
\begin{array}{cccccc}
0&1&a_1&a_1a_2-1&a_5&1\\[8pt]
-1&0&1&a_2&a_2a_3-1&a_6\\[8pt]
-a_1&-1&0&1&a_3&a_3a_4-1\\[8pt]
1-a_1a_2&-a_2&-1&0&1&a_4\\[8pt]
-a_5&1-a_2a_3&-a_3&-1&0&1\\[8pt]
-1&-a_6&1-a_3a_4&-a_4&-1&0
\end{array}
\right)
\left(
\begin{array}{r}
-\b_6\\[8pt]
\b_1\\[8pt]
\b_2\\[8pt]
\b_3\\[8pt]
\b_4\\[8pt]
\b_5
\end{array}
\right)=0.
$$

Observe that
the above equations for the even variables $a_i$ are projected
(modulo $\Rc_{\bar1}$) to the equations defining classical
Coxeter frieze patterns.
The odd variables $\b_i$ in each of the above examples
satisfy a systems of linear equations.
The (skew-symmetric) matrices of the linear systems are
nothing other than the matrices of Coxeter's friezes (for more details, see Section~\ref{CoSec}).

\section{Superfriezes}\label{SFS}

In this section, we introduce the notion of {\it superfrieze}.
It is analogous to that of Coxeter frieze,
and the main properties of superfriezes are similar to those of
Coxeter friezes.
The space of all superfriezes is an algebraic supervariety
isomorphic to the supervariety $\E_n$ of supersymmetric Hill's equations.
In this sense, a frieze as just another, equivalent, way to record
Hill's equations.
Superfriezes provide a good parametrization of the space of Hill's equations.

\subsection{Coxeter frieze patterns and Euler's continuants}\label{CoSec}
We start with an overview of the classical Coxeter frieze patterns,
and explain an isomorphism between the spaces of 
Sturm-Liouville operators and that
of frieze patterns.
For more details, we refer to~\cite{Cox,CoCo,ARS,CaCh,MG,SVRS,Ust}.

The notion of frieze pattern (or a frieze, for short) is due to Coxeter \cite{Cox}.
We define a frieze 
 as an infinite array of numbers 
(or functions, polynomials, etc.):
$$
\begin{array}{cccccccccccc}
&\cdots&&0&&0&&0&&0&&\cdots\\[4pt]
\cdots&&1&&1&&1&&1&&\cdots\\[4pt]
&\cdots&&a_{i}&&a_{i+1}&&a_{i+2}&&a_{i+3}&&\cdots\\[4pt]
&& \cdots&& \cdots&& \cdots&& \cdots&&
\end{array}
$$
where the entries of each next row are  
determined by the previous two rows via the following frieze rule:
for each elementary ``diamond''
$$
\begin{array}{ccc}
&b&\\[2pt]
a&&d\\[2pt]
&c&
\end{array}
$$
one has
$
ad-bc=1.
$

For instance, the entries in the next row of the above frieze are
$a_ia_{i+1}-1$, and the following row has the entries 
$a_ia_{i+1}a_{i+2}-a_i-a_{i+2}$. 

Starting from generic values in the first row of the frieze, the frieze rule  defines the next rows.
For a {\it generic} frieze, the entries of the $k$-th row are equal to
the following determinant
$$
K(a_i,\ldots,a_{i+k-1})=
\left|
\begin{array}{cccc}
a_i&1&&\\[4pt]
1&a_{i+1}&1&\\[4pt]
&\ddots&\ddots&1\\[4pt]
&&1&a_{i+k-1}
\end{array}
\right|
$$
which is a classical
{\it continuant}, already considered by Euler, see~\cite{Mui}.

A frieze pattern is called {\it closed} if a row of $1$'s appears again,
 followed by a row of~$0$'s:
$$
\begin{array}{ccccccccccc}
&0&&0&&0&&0&&\cdots\\[4pt]
\cdots&&1&&1&&1&&1&&\cdots\\[4pt]
&a_i&&a_{i+1}&&a_{i+2}&&a_{i+3}&&a_{i+4}\\[4pt]
&& \cdots&& \cdots&& \cdots&& \cdots&&\\[4pt]
\cdots&&1&&1&&1&&1&&\cdots\\[4pt]
&0&&0&&0&&0&&\cdots
\end{array}
$$
The {\it width} $m$ of a closed frieze pattern is the number of non-trivial rows between the rows of~$1$'s.
In other words, a frieze is closed of width $m$, if and only if
$K(a_i,\ldots,a_{i+m})=1$, and $K(a_i,\ldots,a_{i+m+1})=0$, for all $i$.

Friezes introduced and studied by Coxeter \cite{Cox} are exactly the closed friezes.

Let us recall the following results on friezes,~\cite{Cox,CoCo,ARS}:
\begin{enumerate}
\item
A closed frieze pattern is horizontally periodic with period
$n=m+3,$ that is, $a_{i+n}=a_i$. 
\item
Furthermore, a closed frieze pattern has ``glide symmetry'' whose second iteration 
is the horizontal parallel translation of distance~$n$. 
\item
A frieze pattern with the first row $(a_i)$ is closed if and only if
the Sturm-Liouville equation with potential $(a_i)$ 
has antiperiodic solutions.
\end{enumerate}

The name ``frieze pattern" is due to the glide symmetry.

\begin{prop}
\label{CloP}
The space of closed friezes of width $m$ is an {\it algebraic variety}
of dimension~$m$.
\end{prop}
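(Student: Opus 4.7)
The plan is to reduce Proposition \ref{CloP} to the already-established Proposition \ref{ClSLProp} via the bijection with Hill's operators stated as item (3) in the list preceding the proposition. First I would observe that a closed frieze of width $m$ is uniquely determined by its first non-trivial row $(a_i)$: by property (1), this sequence is $n$-periodic with $n = m+3$, so the entire frieze is encoded by $(a_1,\dots,a_n) \in \K^n$. The frieze rule then expresses every subsequent entry as a polynomial in the $a_i$'s; explicitly, as the text recalls, the entry in position $(i,k)$ is the Euler continuant $K(a_i,\dots,a_{i+k-1})$.

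Next I would write out the closure conditions as polynomial equations in the $a_i$'s. A frieze generated by $(a_i)$ is closed of width $m$ precisely when
\begin{equation*}
K(a_i,\dots,a_{i+m}) = 1 \quad \text{and} \quad K(a_i,\dots,a_{i+m+1}) = 0 \qquad \text{for all } i \in \Z.
\end{equation*}
These are polynomial relations on $(a_1,\dots,a_n)$, so the set of closed friezes is cut out by polynomial equations inside $\K^n$ and is therefore an algebraic variety. Moreover, because of $n$-periodicity, only finitely many of these equations are independent.

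For the dimension count, I would invoke item (3) in the list just above the proposition, which says that a frieze with first row $(a_i)$ closes up if and only if the Sturm--Liouville equation (\ref{SLEq}) with potential $(a_i)$ has all solutions antiperiodic, i.e.\ defines a Hill operator. The map sending a closed frieze to its first row therefore gives an isomorphism between the space of closed friezes of width $m$ and the space of Hill's operators with period $n = m+3$. Proposition \ref{ClSLProp} asserts the latter has dimension $n-3 = m$, which is exactly what we want.

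The main (minor) obstacle, which I would address first, is justifying item (3): one should verify that the closure equations $K(a_i,\dots,a_{i+m+1}) = 0$, $K(a_i,\dots,a_{i+m}) = 1$ are equivalent to the monodromy matrix of (\ref{SLEq}) being $-\Id$. This follows from the standard fact that the $2\times 2$ monodromy $M_i$ over one period can be factored into the matrices $\bigl(\begin{smallmatrix}0 & 1\\ -1 & a_j\end{smallmatrix}\bigr)$, whose products have continuants as entries; the two continuant conditions are then precisely the vanishing of the three independent entries of $M_i + \Id$. Once this correspondence is in place, the proposition follows immediately from Proposition \ref{ClSLProp}, and the algebraic-variety structure is inherited from the explicit polynomial equations displayed above.
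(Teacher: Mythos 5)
Your proposal is correct, but it takes a different route from the paper. The paper's own argument stays entirely on the frieze side: it encodes a closed frieze by its $n$-periodic first row ($n=m+3$), observes that closure amounts to the $2n$ continuant equations $K(a_i,\ldots,a_{i+m})=1$ and $K(a_i,\ldots,a_{i+m+1})=0$ in the $n$ variables $a_1,\ldots,a_n$, and then simply asserts that exactly $3$ of these equations are algebraically independent and imply the rest, giving dimension $n-3=m$. You instead transfer the codimension count to the Hill-operator side: using the recalled equivalence (item (3)) between closure of the frieze and antiperiodicity of all solutions of the Sturm--Liouville equation~(\ref{SLEq}), you identify the space of closed friezes with the space of Hill's operators and quote Proposition~\ref{ClSLProp}. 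This is legitimate and not circular, since item (3) is a classical fact (which you moreover justify via the factorization of the $2\times2$ monodromy into the matrices $\bigl(\begin{smallmatrix}0&1\\-1&a_j\end{smallmatrix}\bigr)$ whose products have continuants as entries, so that the closure equations are exactly $M_i=-\Id$), and Proposition~\ref{ClSLProp} is established earlier and independently. What your route buys is a more transparent source for the number $3$: the condition $M=-\Id$ inside $\SL_2$ is visibly three conditions, whereas the paper's ``exactly $3$ of the $2n$ continuant equations are independent'' is left as an assertion; your argument also parallels the strategy the paper later uses in the supercase (Theorem~\ref{ISOMTH}). What it costs is the need to justify the frieze--Hill correspondence itself, which you do only in sketch form (in particular, the identification of a closed, possibly non-generic, frieze with the continuants of its first row is taken for granted), but this is at the same level of rigor as the paper's own two-line proof.
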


\noindent
Indeed, a closed frieze is periodic, so that one has
a total of $2n=2m+6$ algebraic equations,
$K(a_i,\ldots,a_{i+m})=1$  and $K(a_i,\ldots,a_{i+m+1})=0$
in $n$ variables $a_1,\ldots,a_n$.
It turns out that exactly~$3$ of these equations are algebraically independent,
and imply the rest. 

Based on results of \cite{Cox}, one can formulate the following statement:
the two algebraic varieties below are isomorphic:
\begin{enumerate}
\item
the space of Sturm-Liouville equations with $n$-antiperiodic solutions;
\item
the space of Coxeter's frieze patterns of width $m=n-3$;
 \end{enumerate}
 for details, see \cite{MGOT}, \cite{SVRS}.
The idea of the proof is based on the fact that every diagonal
of a frieze pattern is a solution to the Sturm-Liouville equation
with potential $(a_i),\;i\in\Z$.
More precisely, one has the recurrence formula for continuants
$$
K(a_i,\ldots,a_j)=a_jK(a_i,\ldots,a_{j-1})
-K(a_i,\ldots,a_{j-2})
$$
already known to Euler.

The above isomorphism allows one to identify Hill's equations
and frieze patterns.
The main interest in associating a frieze to a given
Sturm-Liouville equation is that the frieze provides remarkable
local coordinate systems. The coordinates are known as ``cluster coordinates''.

\begin{ex}
\label{CEx}
{\rm
A generic Coxeter frieze pattern of width~$2$ is as follows:
$$
 \begin{array}{ccccccccccc}
\cdots&&1&& 1&&1&&\cdots
 \\[4pt]
&x_1&&\frac{x_2+1}{x_1}&&\frac{x_1+1}{x_2}&&x_2&&
 \\[4pt]
\cdots&&x_2&&\frac{x_1+x_2+1}{x_1x_2}&&x_1&&\cdots
 \\[4pt]
&1&&1&&1&&1&&
\end{array}
$$
for some $x_1,x_2\not=0$.
(Note that we omitted the first and the last rows of $0$'s.)}
This example is related to the work of Gauss~\cite{Gau} on so-called
{\it Pentagramma Mirificum}.
It was noticed by Coxeter~\cite{Cox} that the values of various elements of self-dual spherical pentagons, 
calculated by Gauss, form a frieze of width $2$.
\end{ex}

\subsection{Introducing superfrieze}\label{TheDef}

Similarly to the case of classical Coxeter's friezes, a superfrieze
is a horizontally-infinite array bounded by rows of $0$'s and $1$'s.
Even and odd elements alternate and form ``elementary diamonds'';
there are twice more odd elements.

\begin{defn}
\label{TheMainDefn}
A superfrieze, or a supersymmetric frieze pattern, is the following array
$$
\begin{array}{ccccccccccccccccccccccccc}
&\ldots&0&&&&0&&&&0\\[10pt]
\ldots&{\color{red}0}&&{\color{red}0}&&{\color{red}0}
&&{\color{red}0}&&{\color{red}0}&&\ldots\\[10pt]
\;\;\;1&&&&1&&&&1&&&\ldots\\[10pt]
&{\color{red}\varphi_{0,0}}&&{\color{red}\varphi_{\frac{1}{2},\frac{1}{2}}}&&{\color{red}\varphi_{1,1}}
&&{\color{red}\varphi_{\frac{3}{2},\frac{3}{2}}}&&{\color{red}\varphi_{2,2}}&&\ldots\\[12pt]
&&f_{0,0}&&&&f_{1,1}&&&&f_{2,2}\\[10pt]
&{\color{red}\varphi_{-\frac{1}{2},\frac{1}{2}}}&&{\color{red}\varphi_{0,1}}
&&{\color{red}\varphi_{\frac{1}{2},\frac{3}{2}}}
&&{\color{red}\varphi_{1,2}}&&{\color{red}\varphi_{\frac{3}{2},\frac{5}{2}}}&&\ldots\\[10pt]
f_{-1,0}&&&&f_{0,1}&&&&f_{1,2}&&\\[4pt]
&\iddots&&\iddots&& \ddots&&\ddots&& \ddots&&\!\!\!\ddots\\[4pt]
&&f_{2-m,1}&&&&f_{0,m-1}&&&&f_{1,m}&&&&\\[10pt]
\ldots&{\color{red}\varphi_{\frac{3}{2}-m,\frac{3}{2}}}&&{\color{red}\varphi_{2-m,2}}&&\ldots
&&{\color{red}\varphi_{0,m}}&&{\color{red}\varphi_{\frac{1}{2},m+\frac{1}{2}}}&&{\color{red}\varphi_{1,m+1}}\\[10pt]
\;\;\;1&&&&1&&&&1&&&&&\\[10pt]
\ldots&{\color{red}0}&&{\color{red}0}&&{\color{red}0}
&&{\color{red}0}&&{\color{red}0}&&{\color{red}0}&\\[10pt]
&\ldots&0&&&&0&&&&0&\ldots
\end{array}
$$
where $f_{i,j}$ are even and $\varphi_{i,j}$ are odd, and where every 
{\it elementary diamond}:
$$
\begin{array}{ccccc}
&&B&&\\[4pt]
&{\color{red}\Xi}&&{\color{red}\Psi}&\\[4pt]
A&&&&D\\[4pt]
&{\color{red}\Phi}&&{\color{red}\Sigma}&\\[4pt]
&&C&&
\end{array}
$$
satisfies the following conditions:
\begin{equation}
\label{Rule}
\begin{array}{rcl}
AD-BC&=&1+\Sigma\Xi,\\[4pt]
A\Sigma-C\Xi&=&\Phi,\\[4pt]
B\Sigma-D\Xi&=&\Psi,
\end{array}
\end{equation}
that we call the {\it frieze rule}.

The integer $m$, i.e., the number of even rows between the rows
of $1$'s is called the {\it width} of the superfrieze.
\end{defn}

\begin{rem}
As usual, in the ``supercase'' there exists a projection to the
classical case.
Indeed, choosing all the odd variables $\varphi_{i,j}=0$,
the above definition is equivalent to the definition of a classical Coxeter frieze pattern
with entries $f_{i,j}$.
\end{rem}

Let us comment on the notation.
The indices $i,j$ of the entries of the frieze
stand to number of diagonals of the frieze.
More precisely, the first index $i$ numbers South-East diagonals,
and the second index $j$ numbers North-East diagonals.

\subsection{More about the frieze rule}

The last two equations of~(\ref{Rule}) are equivalent to
$$
B\Phi-A\Psi=\Xi,
\qquad
D\Phi-C\Psi=\Sigma.
$$
Note also that these equations also imply $\Xi\Sigma=\Phi\Psi$,
so that the first equation of~(\ref{Rule}) can also be written as follows: 
$$
AD-BC=1+\Psi\Phi.
$$

Another way to express the last two equations of the frieze rule is to consider
the odd entries neighboring the elementary diamond.
Then for every configuration 
$$
\begin{array}{ccccccc}
&&{\color{red}\widetilde{\Psi}}&&{\color{red}\widetilde{\Xi}}\\[4pt]
&&&B&&\\[4pt]
{\color{red}\widetilde{\Phi}}&&{\color{red}\Xi}&&{\color{red}\Psi}&&{\color{red}\widetilde{\Sigma}}\\[4pt]
&A&&&&D\\[4pt]
&&{\color{red}\Phi}&&{\color{red}\Sigma}&\\[4pt]
&&&C&&
\end{array}
$$
of the frieze, one has:
$$
B\,(\Phi-\widetilde{\Phi})=
A\,(\Psi-\widetilde{\Psi}),
\qquad
B\,(\Sigma-\widetilde{\Sigma})=
D\,(\Xi-\widetilde{\Xi}).
$$

The relation to the group $\OSp(1|2)$ is as follows.
One can associate an elementary diamond with every element of the supergroup $\OSp(1|2)$
(see Appendix)
using the following formula:
$$
\left(
\begin{array}{cc|c}
a&b&\g\\[4pt]
c&d&\d\\[4pt]
\hline
\a&\b&e
\end{array}
\right)
\qquad
\longleftrightarrow
\qquad
\begin{array}{ccccc}
&&\!\!\!-a&&\\[4pt]
&{\color{red}\g}&&{\color{red}\a}&\\[4pt]
b&&&&\!\!\!\!-c\\[4pt]
&\!\!\!{\color{red}-\b}&&{\color{red}\d}&\\[4pt]
&&d&&
\end{array}
$$
so that the relations~(\ref{Rule}) coincide with the relations
defining an element of $\OSp(1|2)$.

The frieze rule~(\ref{Rule}) implies the following elementary but useful properties.

\begin{prop}
\label{EasyOne}
(i)
The entries $\varphi_{i,i}$ in the first non-trivial row
of a generic superfrieze consist of pairs of equal ones:
$\varphi_{i,i}=\varphi_{i+\frac{1}{2},i+\frac{1}{2}}$, where $i\in\Z$.

(ii) 
The entries $\varphi_{i,i}$ in the last non-trivial row of a generic superfrieze consist of pairs of opposite ones:
$\varphi_{i,i+m}=-\varphi_{i-\frac{1}{2},i+m-\frac{1}{2}}$, where $i\in\Z$.
\end{prop}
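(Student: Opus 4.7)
The strategy is straightforward: both assertions should follow by applying the frieze rule~(\ref{Rule}) to a single elementary diamond whose extremities touch the bounding rows of $0$'s and $1$'s. Since those rows furnish many vanishing entries, the defining relations collapse to simple identifications between neighboring odd entries.

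For part~(i), I would place an elementary diamond with $A$ and $D$ on the top row of $1$'s (so $A=D=1$), $B$ on the top row of $0$'s above (so $B=0$), and $C$ on the row of $f_{i,i}$'s just below. The odd entries $\Xi$ and $\Psi$ then lie in the row of odd $0$'s (hence $\Xi=\Psi=0$), while $\Phi$ and $\Sigma$ sit in the first non-trivial odd row. The third equation of~(\ref{Rule}), $B\Sigma-D\Xi=\Psi$, becomes $0=0$ and is automatic; the first, $AD-BC=1+\Sigma\Xi$, reduces to $1=1$; but the second, $A\Sigma-C\Xi=\Phi$, becomes $\Sigma=\Phi$. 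Tracking the indices shows that $\Phi$ and $\Sigma$ are precisely the pair $\varphi_{i,i}$ and $\varphi_{i+\frac12,i+\frac12}$ flanking the entry $C=f_{i,i}$ from above, yielding~(i).

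For part~(ii), the argument is entirely symmetric: I take the elementary diamond with $A$ and $D$ on the bottom row of $1$'s, $C$ on the bottom row of $0$'s below (so $C=0$), and $B$ on the row of $f$'s just above. Now $\Phi$ and $\Sigma$ lie in the bottom row of odd $0$'s, so $\Phi=\Sigma=0$, and $\Xi$, $\Psi$ occupy the last non-trivial odd row. The relation $A\Sigma-C\Xi=\Phi$ is automatic, $AD-BC$ gives $1=1$, and the remaining relation $B\Sigma-D\Xi=\Psi$ collapses to $\Xi+\Psi=0$. Identifying $\Xi=\varphi_{i-\frac12,i+m-\frac12}$ and $\Psi=\varphi_{i,i+m}$ gives~(ii).

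There is no real obstacle here; the only thing to be careful with is the bookkeeping of the half-integer indices, matching the position of the diamond's odd vertices against the labeling scheme displayed in Definition~\ref{TheMainDefn}. Genericity plays no role beyond the tacit assumption that the frieze is defined (so that all entries in question exist); the identities~(i) and~(ii) are forced by the frieze rule regardless of the values of the remaining entries.
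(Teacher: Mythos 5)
Your argument is correct and is exactly the intended one: the paper states Proposition~\ref{EasyOne} without proof as an immediate consequence of the frieze rule~(\ref{Rule}), and your application of that rule to the boundary diamonds (top: $A=D=1$, $B=0$, $\Xi=\Psi=0$, giving $\Sigma=\Phi$; bottom: $A=D=1$, $C=0$, $\Phi=\Sigma=0$, giving $\Psi=-\Xi$) together with the index bookkeeping from Definition~\ref{TheMainDefn} is precisely the verification being left to the reader. Your remark that genericity is not actually needed for these identities is also accurate.
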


\subsection{Examples of superfriezes}\label{SecESF}

The generic superfrieze of width $m=1$ is of the following form:
$$
\begin{array}{ccccccccccccccccccccccc}
&&0&&&&0&&&&\!\!0&&&&0\\[8pt]
&{\color{red}0}&&{\color{red}0}&&{\color{red}0}&&
\!\!\!{\color{red}0}&&{\color{red}0}&&{\color{red}0}&&\;\;\;{\color{red}0}&&\!\!\!{\color{red}0}\\[8pt]
1&&&&1&&&&1&&&&\!\!1&&&&1\\[8pt]
&{\color{red}\xi}&&{\color{red}\xi}&&{\color{red}\xi'}
&&{\color{red}\xi'}&&{\color{red}\xi-x\eta}&&{\color{red}\xi-x\eta}
&&{\color{red}\eta}&&{\color{red}\eta}\\[10pt]
&&x&&&&x'&&&&\!\!\!x&&&&x'\\[10pt]
&{\color{red}\xi-x\eta}&&\;\;\;{\color{red}x\eta-\xi}&&\;{\color{red}\eta}&&{\color{red}-\eta}
&&{\color{red}-\xi}&&{\color{red}\xi}&&{\color{red}-\xi'}&&{\color{red}\xi'}\\[8pt]
1&&&&1&&&&1&&&&\!\!1&&&&1\\[8pt]
&{\color{red}0}&&{\color{red}0}&&\;\;{\color{red}0}
&&\!{\color{red}0}&&{\color{red}0}&&{\color{red}0}&&\;\;\;{\color{red}0}&&\!\!\!{\color{red}0}\\[10pt]
&&0&&&&0&&&&\!\!0&&&&0\
\end{array}
$$
where
$$
x'=\frac{2}{x}+\frac{\eta\xi}{x},
\qquad 
\xi'=\eta-\frac{2\xi}{x}.
$$
One can choose local coordinates $(x,\xi,\eta)$
to parametrize the space of friezes.

The following example is the superanalog of
the frieze from Example~\ref{CEx} related to
the Gauss Pentagramma mirificum.
$$
\begin{array}{cccccccccccccccccccccc}
0&&&&\!\!\!0&&&&\!\!0&&&&\!\!\!0&&&&0&&\ldots\\[10pt]
&{\color{red}0}&&\!\!{\color{red}0}&&{\color{red}0}
&&{\color{red}0}&&{\color{red}0}&&{\color{red}0}&&\!\!{\color{red}0}&&{\color{red}0}&&{\color{red}0}\\[10pt]
\ldots&&1&&&&1&&&&1&&&&\!\!1&&&&\!\!1\\[4pt]
&{\color{red}-\zeta}
&&\!\!{\color{red}\xi}&&{\color{red}\raisebox{.5pt}{\textcircled{\raisebox{-.3pt} {$\xi$}}}}&&{\color{red}\xi'}&&{\color{red}\xi'}
&&\color{red}{\nu}
&&\color{red}{\nu}
&&\!\!{\color{red}\zeta^*}&&{\color{red}\zeta^*}
\\[10pt]
y'&&&&
\!\!{\raisebox{.5pt}{\textcircled{\raisebox{-.3pt} {$x$}}}}&&&&\!\!x'
&&&&x''&&&&y\\[10pt]
&\!\!\!{\color{red}-\eta'}&&
\!\!\!{\color{red}\eta^*}
&&\color{red}{\tau}
&&{\color{red}\raisebox{.5pt}{\textcircled{\raisebox{-.1pt} {$\eta$}}}}
&&\color{red}{\tau'}
&&{\color{red}\eta'}
&&\!\!\!{\color{red}\eta^*}&&\!\!{\color{red}-\tau}
&&{\color{red}\eta}\\[10pt]
&&x''&&&&{\raisebox{.5pt}{\textcircled{\raisebox{-.1pt} {$y$}}}}
&&&&y'&&&&x
&&&&\!\!\!x'\\[10pt]
&\color{red}{\nu}
&&\color{red}{-\nu}
&&{\color{red}\zeta^*}
&&{\color{red}-\zeta^*}
&&{\color{red}\raisebox{.5pt}{\textcircled{\raisebox{-.3pt} {$\zeta$}}}}
&&\!\!{\color{red}-\zeta}&&\!\!\!{\color{red}-\xi}
&&{\color{red}\xi}&&{\color{red}-\xi'}\\[10pt]
1&&&&\!\!1&&&&1&&&&1&&&&1&&\ldots\\[10pt]
&{\color{red}0}&&\!\!{\color{red}0}&&{\color{red}0}&&{\color{red}0}&&{\color{red}0}
&&{\color{red}0}&&\!\!{\color{red}0}&&{\color{red}0}&&{\color{red}0}\\[10pt]
\ldots&&0&&&&0&&&&0&&&&0&&&&\!\!0
\end{array}
$$
The frieze is defined by the initial values
$(x,y,\xi,\eta,\zeta)$, the next values are easily calculated using the frieze rule.
The even entries of the superfrieze are as follows:
$$
x'=\frac{1+y}{x}+\frac{\eta\xi}{x},
\qquad
y'=\frac{1+x+y}{xy}+\frac{\eta\xi}{xy}+\frac{\zeta\eta}{y},
\qquad
x''=\frac{1+x}{y}+\frac{\eta\xi}{y}+\xi\zeta+\frac{x}{y}\zeta\eta.
$$
For the odd entries of the superfrieze, one has:
$$
\begin{array}{lll}
\displaystyle
\xi'=\eta-\frac{1+y}{x}\xi,
&
\displaystyle
\eta'=\zeta-\frac{1+x+y}{xy}\xi-\frac{\xi\eta\zeta}{y},
&
\displaystyle
\tau'=
\frac{1+y}{x}\zeta-\frac{1+x+y}{xy}\eta-\frac{\xi\eta\zeta}{x},
\end{array}
$$
$$
\begin{array}{llll}
\displaystyle
\zeta^*=\eta-y\zeta,
&
\quad
\displaystyle
\eta^*=\xi-x\zeta,
&
\quad
\displaystyle
\nu=
\frac{(1+x)}{y}\eta-\xi-\zeta,
&
\quad
\displaystyle
\tau=
x\eta-y\xi,
\end{array}
$$

The superfriezes exhibited in the above example have many
symmetries and periodicities.
Our next task is to derive these properties
of superfriezes in general.

\subsection{Generic superfriezes and Hill's equations}\label{RecuRSSec}
Like Coxeter's friezes, superfriezes enjoy remarkable properties,
under some conditions of genericity.
We begin with the most elementary way to define generic superfriezes.

\begin{defn}
\label{GenDef}
A superfrieze is called {\it generic} if every even entry is invertible.
\end{defn}

The following lemma explains the relation between superfriezes and linear difference equations. 

\begin{lem}
\label{RecurLem}
The entries of every South-East diagonal of a generic superfrieze 
$$
(W_i,V_i):=(\varphi_{j,i},\,f_{j,i}),
$$ 
where $j$ is an arbitrary (fixed) integer,
satisfy Hill's equation~(\ref{SuperHE}) with
the potential $U_i=\b_i+\xi{}a_i$, where $a_i$ and $\b_i$ are given by the first two rows
of the superfrieze,
i.e., $a_i=f_{i,i}$ and $\b_i=\varphi_{i,i}$.
\end{lem}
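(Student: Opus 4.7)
The plan is to prove Hill's equation~\eqref{SuperHE} by induction on $i$, with the frieze rule~\eqref{Rule} and Proposition~\ref{EasyOne} as the main tools.

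Setting $V_i = f_{j,i}$ and $W_i = \varphi_{j,i}$ for fixed $j$, I first use the boundary rows of $0$'s and $1$'s at the top of the superfrieze to read off the initial data $V_{j-2} = 0$, $V_{j-1} = 1$, $W_{j-1} = 0$. The first non-trivial rows then yield $V_j = a_j$ and $W_j = \beta_j$, where I invoke the pair symmetry $\varphi_{j,j} = \varphi_{j+\frac12,j+\frac12}$ of Proposition~\ref{EasyOne} to identify $\varphi_{j,j}$ with the potential coefficient $\beta_j$. Substituting these values into~\eqref{SuperHE} at $i=j$ gives a direct match, furnishing the base of the induction.

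For the inductive step, assuming~\eqref{SuperHE} at all smaller indices, I would apply the three equations of the frieze rule to the elementary diamond with bottom vertex $C = f_{j,i}$. Its corners on the SE diagonals $j$ and $j+1$ are $A = f_{j,i-1}$, $B = f_{j+1,i-1}$, $D = f_{j+1,i}$, and its odd entries come from the $\varphi$'s on the same two diagonals. Combining these three frieze-rule equations with the recurrence on the neighbouring diagonal $j+1$ (handled by a parallel induction on $j$, supplying a super-analogue of the classical Euler continuant identity) yields~\eqref{SuperHE} at $i$ on diagonal $j$. Alternatively, invoking the $\OSp(1|2)$-matrix interpretation of the elementary diamond indicated immediately after~\eqref{Rule}, the successive diamonds along the SE diagonal compose to give the transfer matrices of~\eqref{SuperHE}, identifying the diagonal as a solution of Hill's equation.

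The main technical obstacle is matching the half-integer indexing of the odd entries in each elementary diamond with the correct values $\beta_i = \varphi_{i,i}$ from the first row and with the values $\varphi_{j,i-1}$, $\varphi_{j+1,i-1}$ from the neighbouring diagonals, keeping careful track of the pair symmetry and sign conventions of Proposition~\ref{EasyOne}. Once this bookkeeping is settled, the inductive step becomes a routine supercommutative computation; the case $i<j$ follows by running the same argument from the bottom boundary.
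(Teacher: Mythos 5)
Your overall strategy is the paper's: anchor the recurrence at the boundary rows and propagate it with the frieze rule, using the recurrence on the neighbouring diagonal; your base case at $i=j$ (where $f_{j,j-2}=0$, $f_{j,j-1}=1$, $\varphi_{j,j-1}=0$) is correct, and incidentally needs no appeal to Proposition~\ref{EasyOne}, since $\beta_j$ is by definition $\varphi_{j,j}$. The genuine gap is in the inductive step. The identity~\eqref{SuperHE} at $(j,i)$ involves $f_{j,i-2}$ and $\varphi_{j,i-1}$, and these entries occur neither in the three frieze-rule equations of the single diamond with bottom vertex $f_{j,i}$ (whose entries are $f_{j,i-1},f_{j+1,i-1},f_{j+1,i},f_{j,i}$ and the four surrounding odd entries) nor in the diagonal-$(j+1)$ recurrence at index $i$; the induction hypothesis at smaller indices on diagonal $j$ only ties them to $a_{i-1},\beta_{i-1}$ and still earlier entries. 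So no combination of the equations you list can produce the target relation: you must also use the frieze relations of the adjacent diamond sharing the edge $f_{j,i-1}$--$f_{j+1,i-1}$ (equivalently, the cross-diamond identities $B(\Phi-\widetilde{\Phi})=A(\Psi-\widetilde{\Psi})$, etc., recorded after~\eqref{Rule}). This is exactly what the paper does: it combines $AD-BC=1+\Psi\Phi$ from that upper diamond with $CF-DE=1+\Lambda\Omega$ from the lower one and with $D(\Omega-\Phi)=C(\Lambda-\Psi)$, and then cancels the shared even entry $D$ twice. That cancellation is the only place where genericity (Definition~\ref{GenDef}, invertibility of even entries) is used, and your sketch never identifies it; a ``routine supercommutative computation'' that never divides by an even entry cannot be the right one.

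Two further points. First, the induction that actually carries the argument is across diagonals, from the relation at $(j+1,i)$ to the relation at $(j,i)$, anchored at $j=i$ where the entries are boundary values; your primary hypothesis ``\eqref{SuperHE} at all smaller indices on diagonal $j$'' does no work, and the ``parallel induction on $j$'' (really an induction on $i-j$) should be the main one --- no super-analogue of the Euler continuant identity is needed. Second, the alternative route via the $\OSp(1|2)$ reading of a diamond is circular as stated: the matrix attached to an elementary diamond is built from that diamond's own entries, not from the first-row coefficients $a_i,\beta_i$, so the claim that consecutive diamonds compose to the transfer matrices $A_i$ of~\eqref{SuperHE} is essentially the assertion of the lemma itself, not a proof of it.
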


\begin{proof}
We proceed by induction.
Assume that the following fragment of a superfrieze:
$$
\begin{array}{ccccccccc}
&&&&0\\
&&&\iddots&&{\color{red}0}\\[2pt]
&&\;B&&&&1\\[2pt]
&\;{\color{red}\Xi}&&{\color{red}\Psi}&&\iddots&&{\color{red}\b_i}\\[2pt]
A&&&&D&&&&a_i\\[2pt]
&\;{\color{red}\Phi}&&{\color{red}\Sigma}&&{\color{red}\L}&&\iddots\\[2pt]
&&\;C&&&&F\\[2pt]
&&&{\color{red}\Omega}&&\iddots\\[2pt]
&&&&E
\end{array}
$$
satisfies the relations
$$
F=a_iD-B-\b_i\Psi,
\qquad
\L=\Psi+\b_iD
$$
corresponding to the recurrence~\eqref{SuperHE}.
We need to prove that these relations propagate on the next diagonal, i.e., that
$$
E=a_iC-A-\b_i\Phi,
\qquad
\Omega=\Phi+\b_iC.
$$
Indeed, using the superfrieze rule $D(\Omega-\Phi)=C(\L-\Psi)$, we deduce that
$D(\Omega-\Phi)=\b_iCD$, and canceling  $D$, we obtain the second desired relation.
For the even entries, we use the rule:
$CF-DE=1+\L\Omega$ together with $AD-BC=1-\Phi\Psi$.
We have:
$$
\begin{array}{rcl}
DE &=& CF-1-\L\Omega\\[4pt]
&=& a_iCD-CB-\b_iC\Psi-1-\L\Omega\\[4pt]
&=& a_iCD+1-\Phi\Psi-AD-\b_iC\Psi-1-\L\Omega\\[4pt]
&=& a_iCD-(\Phi+\b_iC)\Psi-AD-(\Psi+\b_iD)(\Phi+\b_iC)\\[4pt]
&=& a_iCD-AD-\b_iD\Phi,
\end{array}
$$
and again canceling $D$ we obtain the desired relation.

Note that canceling $D$ twice is possible due the genericity assumption.
\end{proof}

A similar property holds for North-East diagonals.

\begin{lem}
\label{RecurLemDual}
The entries of every North-East diagonal of a generic superfrieze 
$$
(W^*_i,V^*_i):=(\varphi_{i+\frac32,j+\frac12},\,f_{i+2,j}),
$$ 
where $j$ is an arbitrary (fixed) integer,
satisfy the following Hill equation
\begin{equation}
\label{SuperHEDual}
\left(
\begin{array}{l}
V^*_{i-1}\\[4pt]
V^*_{i}\\[4pt]
W^*_{i}
\end{array}
\right)=
\left(
\begin{array}{rr|r}
0&1&0\\[4pt]
-1&a_i&\b_i\\[4pt]
\hline
0&-\b_i&1
\end{array}
\right)
\left(
\begin{array}{l}
V^*_{i-2}\\[4pt]
V^*_{i-1}\\[4pt]
W^*_{i-1}
\end{array}
\right),
\end{equation}
where $a_i=f_{i,i}$ and $\b_i=\varphi_{i+\frac12,i+\frac12}$.
\end{lem}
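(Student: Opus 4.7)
The plan is to mirror the proof of Lemma~\ref{RecurLem}, adapted to the North-East direction. First, I observe that by Proposition~\ref{EasyOne}(i), $\varphi_{i+\frac12,i+\frac12}=\varphi_{i,i}$, so the odd coefficient sequence $\beta_i$ appearing in~\eqref{SuperHEDual} coincides with the one in Lemma~\ref{RecurLem}. The dual Hill equation therefore differs from the original only by the sign of $\beta_i$ in the off-diagonal entries of the transition matrix, and both equations are built from the same potential $U_i=\beta_i+\xi a_i$.

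I would then set up an induction along a NE diagonal, using a fragment of the superfrieze analogous to the one in the proof of Lemma~\ref{RecurLem} but reflected, so that entries are propagated upward and to the right rather than downward. Assuming the dual recurrence holds at one step along the diagonal, the inductive step amounts to establishing both identities
\[
W^*_i=W^*_{i-1}-\beta_iV^*_{i-1},\qquad V^*_i=a_iV^*_{i-1}-V^*_{i-2}+\beta_iW^*_{i-1}
\]
at the next step. For the odd recurrence I would apply one of the odd frieze rules (for instance $B\Sigma-D\Xi=\Psi$, or its equivalent form $B\Phi-A\Psi=\Xi$) to the appropriate elementary diamond and cancel a common even factor using genericity. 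For the even recurrence, I would combine $AD-BC=1+\Sigma\Xi$ with the freshly established odd identity exactly as in the earlier lemma, and cancel an invertible even entry a second time.

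The sign flip of $\beta_i$ relative to Lemma~\ref{RecurLem} originates from the fact that along a NE diagonal the roles of the $\{\Xi,\Sigma\}$ corners and the $\{\Phi,\Psi\}$ corners of each elementary diamond are interchanged with respect to the direction of propagation, combined with the sign asymmetries intrinsic to the frieze rule~\eqref{Rule}. The main obstacle, in practice, will be the careful bookkeeping of these signs and the correct identification of which corners of each diamond play which roles in the mirrored setting; once this is fixed, the algebraic manipulations are structurally identical to those in the proof of Lemma~\ref{RecurLem}, and the two cancellations of even entries are justified by the genericity hypothesis in exactly the same way.
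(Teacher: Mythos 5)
Your starting observation that $\b_i=\varphi_{i+\frac12,i+\frac12}=\varphi_{i,i}$ (Proposition~\ref{EasyOne}) and your general plan of mirroring Lemma~\ref{RecurLem} are indeed what the paper does; the difference is in how the mirroring is organized, and that difference is exactly where your two open points hide. The paper runs the same top-anchored, diamond-by-diamond induction as in Lemma~\ref{RecurLem} and first establishes, along the North-East diagonal $(V'_i,W'_i)=(f_{i,j},\varphi_{i+\frac12,j+\frac12})$, the \emph{downward} system
\begin{equation*}
V'_i=a_iV'_{i+1}-V'_{i+2}+\b_iW'_{i+1},\qquad W'_i=W'_{i+1}+\b_iV'_{i+1};
\end{equation*}
it then inverts the matrix of this system (only $\b_i^2=0$ is needed) and shifts indices to obtain \eqref{SuperHEDual}. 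The inversion is precisely what produces the sign flip of $\b_i$, so the ``careful bookkeeping of signs'' that you postpone is replaced by a one-line matrix computation.

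The step of your sketch that would fail as literally described is the direction of the induction. Propagating ``upward and to the right'', i.e.\ in the direction in which \eqref{SuperHEDual} reads, forces the base case onto the bottom rows of $1$'s and $0$'s; but verifying the recurrence there amounts to identifying the last nontrivial entries of a North-East diagonal with the top-row coefficients (for instance $f_{j-m+1,j}=a_{j-m-1}$), which is exactly the glide symmetry/periodicity proved only later (Lemma~\ref{PerCol}, Theorem~\ref{Glade}) \emph{from} the present lemma --- the argument would be circular. Similarly, an induction stepping from index $i-1$ to index $i$ along a single diagonal does not close up, because the coefficients $a_i,\b_i$ sit at the head of a different diagonal; as in Lemma~\ref{RecurLem}, the relation at a fixed index $i$ has to be propagated across adjacent diagonals, starting from the diagonal where it is a boundary triviality ($f_{i,i}=a_i$, $f_{i+1,i}=1$, $f_{i+2,i}=0$, $\varphi_{i+\frac12,i+\frac12}=\b_i$, $\varphi_{i+\frac32,i+\frac12}=0$). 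Once the induction is anchored at the top in this way, you may indeed carry the upward-reading identities through each step (at every location they are equivalent to the downward ones, again by $\b_i^2=0$), and your diamond manipulations with the two genericity cancellations go through as in Lemma~\ref{RecurLem}; but the paper's route --- establish the naturally propagating system first, then invert and shift --- is the cleaner way to get both the base case and the sign of $\b_i$ right.
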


\begin{proof}
Consider the $j$th North-East diagonal 
$(V'_i,W'_i):=(f_{i,j},\,\varphi_{i+\frac12,j+\frac12})$.
As in the proof of Lemma~\ref{RecurLem}, by induction one establishes
the following system:
$$
\begin{array}{rcl}
V'_i&=&a_iV'_{i+1}-V'_{i+2}+\b_iW'_{i+1},\\[4pt]
W'_i&=&\b_iV'_{i+1}+W'_{i+1}.
\end{array}
$$
Inverting the matrix of the system and shifting the indices,
one obtains~\eqref{SuperHEDual}.
\end{proof}

Note that the difference between the equation~\eqref{SuperHE} and the equation~\eqref{SuperHEDual}
is in the sign of the odd coefficients $\b_i$.

The following properties are crucial for the notion of variety of friezes
introduced in the sequel.

\begin{prop}
\label{Easy}
(i)
A generic superfrieze is completely determined by the first two non-trivial rows, 
$\varphi_{i,i}$ and $f_{i,i}$, below the row of $1$'s.

(ii)
The entries $f_{j,i}$,  $\varphi_{j,i}$ and $\varphi_{j-\frac12,i+\frac12}$ of a generic superfrieze are polynomials in the
entries $\b_i$ and $a_i$ of the first two rows, defined by the recurrent formula:
\begin{equation}
\label{RecForm}
\left(
\begin{array}{l}
f_{j,i-1}\\[2pt]
f_{j,i}\\[2pt]
\varphi_{j,i}
\end{array}
\right)=
A_i
\left(
\begin{array}{c}
f_{j,i-2}\\[2pt]
f_{j,i-1}\\[2pt]
\varphi_{j,i-1}
\end{array}
\right),
\end{equation}
where $A_i$ is the matrix 
of the system~(\ref{SuperHE}),
starting from the initial conditions
\begin{equation}
\label{RecInitForm}
(f_{j,j-3},f_{j,j-2},\varphi_{j,j-2})=(-1,0,0),
\end{equation}
and $\varphi_{j-\frac12,i+\frac12}=f_{j,i}\varphi_{j-1,i}-f_{j-1,i}\varphi_{j,i}$. 
\end{prop}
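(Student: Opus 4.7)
The plan is to deduce both parts from Lemma~\ref{RecurLem} together with a single application of the frieze rule. For part~(ii), the asserted recurrence $(f_{j,i-1},f_{j,i},\varphi_{j,i})^T=A_i(f_{j,i-2},f_{j,i-1},\varphi_{j,i-1})^T$ is exactly the content of Lemma~\ref{RecurLem} applied to the South-East diagonal with fixed first index $j$, once the potential $(a_i,\b_i)$ is identified with the entries $(f_{i,i},\varphi_{i,i})$ of the first two non-trivial rows.

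It then remains to check that the stated initial conditions are the correct ones. An inspection of the border shows that $f_{j,j-2}$ lies on the even border row of $0$'s immediately above the row of $1$'s, and $\varphi_{j,j-2}$ lies on the odd border row of $0$'s, so both vanish. The value $f_{j,j-3}=-1$ is the formal convention needed in order that the recurrence produce $f_{j,j-1}=1$ (the row of $1$'s): substituting into the first equation of~\eqref{SuperHE} gives $f_{j,j-1}=a_{j-1}\cdot 0-(-1)-\b_{j-1}\cdot 0=1$, and then one successively reads off $\varphi_{j,j-1}=0$, $f_{j,j}=a_j$, $\varphi_{j,j}=\b_j$, in agreement with the first two non-trivial rows. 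Iterating~\eqref{RecForm} then exhibits every $f_{j,i}$ and every $\varphi_{j,i}$ as a polynomial in the $a_k$'s and $\b_k$'s.

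For the half-offset odd entries $\varphi_{j-\frac12,i+\frac12}$, I would apply the second identity of the frieze rule~\eqref{Rule}, namely $\Phi=A\Sigma-C\Xi$, to the elementary diamond whose four even/odd corners are $A=f_{j,i}$, $C=f_{j-1,i}$, $\Xi=\varphi_{j,i}$, $\Sigma=\varphi_{j-1,i}$, with $\Phi=\varphi_{j-\frac12,i+\frac12}$; this yields the stated formula at once.

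Part~(i) is then an immediate corollary of~(ii): the first two non-trivial rows determine $(a_i,\b_i)$, which determines every $(f_{j,i},\varphi_{j,i})$ through the recurrence, and in turn the remaining half-offset odd entries through the formula just derived. The only point genuinely requiring care is the bookkeeping of indices at the top border, which is what forces the somewhat counter-intuitive initial value $f_{j,j-3}=-1$; once that is in place the rest is a routine unwinding of Lemma~\ref{RecurLem} and the frieze rule.
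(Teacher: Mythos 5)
Your proposal is essentially the paper's own argument: part (ii) is Lemma~\ref{RecurLem} plus the identification of the border values of a South--East diagonal, and part (i) follows; the paper anchors the recursion by reading off $(f_{j,j-2},f_{j,j-1},\varphi_{j,j-1})=(0,1,0)$ from the border and then deriving $f_{j,j-3}=-1$, $\varphi_{j,j-2}=0$ backwards, whereas you run the same consistency check forwards. That is correct in substance.

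Two bookkeeping slips should be fixed. First, $\varphi_{j,j-2}$ does \emph{not} lie on the odd border row of $0$'s: on the $j$-th South--East diagonal the odd border zero is $\varphi_{j,j-1}$, while $\varphi_{j,j-2}$ (like $f_{j,j-3}$) is not an entry of the frieze at all but a formal seed; its value $0$ is justified, exactly as you justify $f_{j,j-3}=-1$, by the fact that one application of $A_{j-1}$ to $(-1,0,0)$ reproduces the genuine border values $(0,1,0)$, after which forward determinism of~\eqref{RecForm} together with Lemma~\ref{RecurLem} identifies the polynomials with the frieze entries. Second, your diamond labelling for the half-integer entries is not that of an actual elementary diamond: in every diamond the lower-left odd entry $\Phi$ carries integer indices (and the upper-left and lower-right entries $\Xi,\Sigma$ carry half-integer ones), so the identity $\Phi=A\Sigma-C\Xi$ cannot be instantiated with $\Phi=\varphi_{j-\frac12,i+\frac12}$, $\Xi=\varphi_{j,i}$, $\Sigma=\varphi_{j-1,i}$. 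The correct citation is the equivalent form of the frieze rule recorded in the paper, $D\Phi-C\Psi=\Sigma$, applied to the diamond with right corner $D=f_{j,i}$, bottom corner $C=f_{j-1,i}$, $\Phi=\varphi_{j-1,i}$, $\Psi=\varphi_{j,i}$ and $\Sigma=\varphi_{j-\frac12,i+\frac12}$, which gives the stated formula verbatim. With these relabelings your proof coincides with the paper's.
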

\begin{proof}
Lemma~\ref{RecurLem} implies that every diagonal of a generic superfrieze
is determined by~$\b_i$ and~$a_i$ via the Hill equation~(\ref{SuperHE}).
Therefore, the entries of the frieze are
obtained as solutions $(V_i+\xi{}W_i)$ with the initial conditions
$$
V_{-1}=0,
\quad
V_0=1,
\qquad
W_0=0.
$$
Finally, these initial conditions imply $V_{-2}=-1,W_{-1}=0$.
Hence the result. 
\end{proof}

\begin{ex}
\label{PolyEx}
A generic superfrieze starts as follows:
$$
\begin{array}{cccccccccccccc}
&&\ldots&&0&&\ldots\\[10pt]
&&&{\color{red}0}&&{\color{red}0}\\[10pt]
&&1&&&&1\\[10pt]
&{\color{red}\b_0}&&{\color{red}\b_1}&&{\color{red}\b_1}&&{\color{red}\b_2}\\[10pt]
a_0&&&&a_1&&&&a_2\\[10pt]
&
{\color{red}
\begin{array}{c}
a_0\b_1\\
+\b_0
\end{array}}
&&{\color{red}
\begin{array}{c}
a_1\b_0\\
+\b_1
\end{array}}
&&{\color{red}
\begin{array}{c}
a_1\b_2\\
+\b_1
\end{array}}
&&{\color{red}
\begin{array}{c}
a_2\b_1\\
+\b_2
\end{array}}\\[10pt]
&&
\begin{array}{c}
a_0a_1-1\\
+\b_0\b_1
\end{array}
&&&&
\begin{array}{c}
a_1a_2-1\\
+\b_1\b_2
\end{array}\\[10pt]
&&&
{\color{red}
\begin{array}{r}
\b_0\b_1\b_2\\
+\b_0-\b_2\\
+a_0a_1\b_2\\
+a_0\b_1
\end{array}}
&&
{\color{red}
\begin{array}{r}
\b_0\b_1\b_2\\
-\b_0+\b_2\\
+a_1a_2\b_0\\
+a_2\b_1
\end{array}}\\[20pt]
&&\vdots&&
\begin{array}{r}
a_0a_1a_2-a_0\\
+\b_0\b_2-a_2\\
+a_0\b_1\b_2\\
+a_2\b_0\b_1\\
\end{array}
&&\vdots\\
&&&&
\vdots&&
\end{array}
$$
that can be deduced directly from~\eqref{Rule}.
\end{ex}

The fact that the frieze is closed, i.e., ends with the rows of $1$'s and $0$'s, imposes
strong conditions on the values of $(\b_i)$ and $(a_j),\;i,j\in\Z$.
These conditions will be described in Section~\ref{GladSec}.

\subsection{The glide symmetry and periodicity}\label{GladSec}
The properties of periodicity and glide symmetry
are analogous to Coxeter's glide symmetry of frieze patterns.
In the classical case, it was proved by Coxeter~\cite{Cox}.
This periodicity is usually considered in contemporary works
as an illustration of Zamolodchikov's periodicity conjecture;
see~\cite{Kel} and references therein. 

\begin{lem}
\label{PerCol}
The entries of a superfrieze
of width $m$ satisfy the following periodicity property:
$$
\varphi_{i+n,j+n}=-\varphi_{i,j},
\qquad
f_{i+n,j+n}=f_{i,j},
\qquad
\hbox{for all}
\quad
i,j\in\Z,
$$ 
where $n=m+3$;
in particular, the entries of the first two rows satisfy 
$a_{i+n}=a_i,\,\b_{i+n}=\b_i$, for all $i\in\Z$.
\end{lem}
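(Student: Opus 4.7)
The strategy is to identify each SE-diagonal of the superfrieze with a solution of the supersymmetric Hill recurrence, and to exploit the closing boundary condition of the frieze to force this solution to satisfy the (anti)periodicity of Definition~\ref{DefHEq}. Once that is established, the lemma follows by linearity and uniqueness of Hill solutions.

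By Lemma~\ref{RecurLem} and Proposition~\ref{Easy}, the $k$-th SE-diagonal $(V^{(k)}_i, W^{(k)}_i) := (f_{k,i}, \varphi_{k,i})$ is the unique solution of~\eqref{SuperHE} with potential $(a_i, \b_i) = (f_{i,i}, \varphi_{i,i})$ and initial data $(V^{(k)}_{k-3}, V^{(k)}_{k-2}, W^{(k)}_{k-2}) = (-1, 0, 0)$. The closing rows of $1$'s and $0$'s at the bottom of the frieze give the uniform final condition $(V^{(k)}_{k+m}, V^{(k)}_{k+m+1}, W^{(k)}_{k+m+1}) = (1, 0, 0)$, valid for every~$k$.

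I apply this to the three successive diagonals $k = 0, 1, 2$. Their states $(V^{(k)}_{-1}, V^{(k)}_0, W^{(k)}_0)$ at time $0$, obtained by iterating the recurrence from the initial data, are $(1, a_0, \b_0)$, $(0, 1, 0)$, and $(-1, 0, 0)$ respectively; their states at time $n$, obtained by extending the recurrence past the closing via $A_{n-1}, A_n$, are $(-1, -a_n, -\b_n)$, $(0, -1, 0)$, and $(1, 0, 0)$. These give three linear conditions on the monodromy $M_0 \in \OSp(1|2)$: diagonals $k=2$ and $k=1$ fix its first two columns to $(-1,0,0)^T$ and $(0,-1,0)^T$, and diagonal $k=0$ supplies the equations $\b_0 c_1 = 0$, $\b_0 c_2 = a_0 - a_n$, $\b_0 c_3 = -\b_n$ for the third column $(c_1, c_2, c_3)^T$. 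The orthosymplectic relation then forces $c_1 = c_2 = 0$ and $c_3 = \pm 1$, and the sign is pinned to $+1$ because every transport matrix $A_i$ has $+1$ in its bottom-right entry and hence lies in the identity component of $\OSp(1|2)$, whence so does the product $M_0$. Therefore $M_0 = \mathrm{diag}(-1,-1,1)$, which is precisely condition~\eqref{MonCond} of Definition~\ref{DefHEq}, yielding $a_{i+n} = a_i$, $\b_{i+n} = -\b_i$, and $V_{i+n} = -V_i$, $W_{i+n} = W_i$ for every Hill solution.

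To finish: diagonal $k+n$ has initial state $(-1,0,0)$ at time $(k+n)-2 = k+m+1$, whereas diagonal $k$ has state $(1,0,0)$ at the same time by its closing. Linearity of the Hill recurrence gives $V^{(k+n)} = -V^{(k)}$ and $W^{(k+n)} = -W^{(k)}$ as sequences, and combining with the (anti)periodicity of each individual diagonal yields $f_{k+n, i+n} = V^{(k+n)}_{i+n} = -V^{(k)}_{i+n} = V^{(k)}_i = f_{k,i}$ and $\varphi_{k+n, i+n} = W^{(k+n)}_{i+n} = -W^{(k)}_{i+n} = -W^{(k)}_i = -\varphi_{k,i}$, the claimed periodicity. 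The main obstacle throughout is the sharp identification $M_0 = \mathrm{diag}(-1,-1,1)$: the three linear closing conditions together with $M_0 \in \OSp(1|2)$ only narrow $M_0$ down to $\mathrm{diag}(-1,-1,\pm 1)$, and ruling out the wrong sign requires the connected-component observation about the transport matrices $A_i$.
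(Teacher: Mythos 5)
Your argument is correct in substance, but it takes a genuinely different route from the paper's proof of Lemma~\ref{PerCol}. The paper argues locally at the bottom boundary: the odd South--East relation of Lemma~\ref{RecurLem} and the odd North--East relation of Lemma~\ref{RecurLemDual}, evaluated where a diagonal meets the closing rows of $1$'s and $0$'s, give $\varphi_{i-m-1,i-1}=-\beta_i$ and $\varphi_{i-m-\frac32,i-\frac32}=-\beta_{i-m-3}$, so Proposition~\ref{EasyOne}, Part~(ii), yields the (anti)periodicity of the first two rows, and the periodicity of the whole array then follows from Proposition~\ref{Easy}, Part~(i). You instead extract the full monodromy identity $M=\mathrm{diag}(-1,-1,1)$ from the closing data of three consecutive diagonals together with membership in $\OSp(1|2)$; this is essentially the converse direction of the paper's Theorem~\ref{ISOMTH}, transplanted into the proof of the lemma. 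Both routes work: the paper's is shorter and avoids the monodromy at this stage, while yours establishes condition~\eqref{MonCond} (and hence most of the isomorphism with $\E_n$) at the same time. Note also that your conclusion $\beta_{i+n}=-\beta_i$ agrees with the paper's own proof and with~\eqref{SPerEq}; the clause ``$\beta_{i+n}=\beta_i$'' in the statement of the lemma appears to be a sign slip.

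Two points need tightening. First, you run the computation only at one base point and then assert $a_{i+n}=a_i$, $\beta_{i+n}=-\beta_i$ and $V_{i+n}=-V_i$, $W_{i+n}=W_i$ for all $i$; a single monodromy identity only relates times $0$ and $n$. Since the closing conditions hold for every diagonal, the identical argument with $k,k+1,k+2$ gives the monodromy condition at every base point (equivalently, once the coefficients are known to be (anti)periodic, Lemma~\ref{MInd} supplies it), and this must be said explicitly because your final paragraph uses the antiperiodicity of each diagonal at all relevant times. Second, the sign ambiguity you single out as the main obstacle is not actually there: in the paper's presentation of $\OSp(1|2)$ the bottom-right entry is $e=1+\alpha\beta$, so once the first two columns force $\alpha=\beta=0$, the defining relations give $\gamma=\delta=0$ and $e=1$ outright; indeed $-\Id$ is not an element of $\OSp(1|2)$, as the paper itself remarks. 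Your appeal to the ``identity component'' of a supergroup over an arbitrary supercommutative ring is the least rigorous step of the write-up, and it is fortunately unnecessary.
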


\begin{proof}
Let us first prove that the entries of
the first two non-trivial rows $a_i=f_{i,i}$ and $\b_i=\varphi_{i,i}$ are $n$-(anti)periodic.
Indeed, consider the bottom part of the frieze:
$$
\begin{array}{ccccccccccccccccc}
&&&&\!\!\!\!\!\!\!\!f_{i-m-1,i-2}&&&&&&&\\[10pt]
&&&\!\!\!\!\!\!\!\!{\color{red}\varphi_{i-m-\frac32,i-\frac32}}
&&\!\!\!\!\!\!\!\!{\color{red}\varphi_{i-m-1,i-1}}&&&\\[10pt]
&&\!\!\!\!\!\!\!\!f_{i-m-2,i-2}&&&&\!\!\!\!\!\!\!\!f_{i-m-1,i-1}&&&&\\[10pt]
&\!\!\!\!\!\!\!\!{\color{red}\varphi_{i-m-\frac52,i-\frac32}}&&\!\!{\color{red}0}
&&\!\!{\color{red}0}&&\!\!\!\!\!\!\!\!{\color{red}\varphi_{i-m-1,i}}&&\\[10pt]
f_{i-m-3,i-2}&&&&\!\!0&&&&\!\!\!\!\!\!\!\!f_{i-m-1,i}
\end{array}
$$
We use the odd ``South-East relation''
of Lemma~\ref{RecurLem} with $j=i-m-1$:
$$
\underbrace{\varphi_{i-m-1,i}}_{=0}=
\varphi_{i-m-1,i-1}+\b_i\underbrace{f_{i-m-1,i-1}}_{=1}
$$
to obtain $\varphi_{i-m-1,i-1}=-\b_i$.
We use the odd ``North-East relation''
of Lemma~\ref{RecurLemDual} with $j=i-2$:
$$
\varphi_{i-m-\frac32,i-\frac32}=
-\b_{i-m-3}\underbrace{f_{i-m-2,i-2}}_{=1}+
\underbrace{\varphi_{i-m-\frac52,i-\frac32}}_{=0}
$$
to obtain $\varphi_{i-m-\frac32,i-\frac32}=-\b_{i-m-3}$.
By Proposition~\ref{EasyOne}, Part (ii),
one deduces the antiperiodicity of the odd coefficients $\b_i=-\b_{i-m-3}$.
Similarly, using the even relations, one deduces the periodicity of 
the even coefficients $a_i=a_{i-m-3}$.

Since the first two non-trivial rows determine the frieze,
see Proposition~\ref{Easy}, Part (i),
the periodicity follows.
\end{proof}

Furthermore, the following statement is analogous to the glide symmetry of friezes
discovered by Coxeter~\cite{Cox}.

\begin{thm}
\label{Glade}
A generic superfrieze satisfies the following glide symmetry
\begin{equation}
\label{GladeEq}
\begin{array}{rcl}
f_{i,j}&=&f_{j-m-1,i-2},\\[4pt]
\varphi_{i,j}&=&\varphi_{j-m-\frac{3}{2},i-\frac32},\\[4pt]
\varphi_{i+\frac12,j+\frac12}&=&-\varphi_{j-m-1,i-1},
\end{array}
\end{equation}
for all $i,j\in\Z$.
\end{thm}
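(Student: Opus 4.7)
The plan is to invoke Proposition~\ref{Easy}(i): a generic superfrieze is completely determined by its first two non-trivial rows. I would define the auxiliary array
\[
\tilde{f}_{i,j} := f_{j-m-1,\,i-2},\qquad
\tilde{\varphi}_{i,j} := \varphi_{j-m-\frac{3}{2},\,i-\frac{3}{2}},\qquad
\tilde{\varphi}_{i+\frac{1}{2},\,j+\frac{1}{2}} := -\varphi_{j-m-1,\,i-1},
\]
show that it is itself a superfrieze with the same first two non-trivial rows, and conclude by uniqueness that $(\tilde f, \tilde\varphi) = (f, \varphi)$, which is~\eqref{GladeEq}.

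To match the first rows I need $\tilde f_{i,i} = a_i$ together with $\tilde\varphi_{i,i} = \tilde\varphi_{i+\frac{1}{2}, i+\frac{1}{2}} = \beta_i$. The identities $\varphi_{i-m-1, i-1} = -\beta_i$ and $\varphi_{i-m-\frac{3}{2}, i-\frac{3}{2}} = -\beta_{i-m-3}$ are established in the proof of Lemma~\ref{PerCol}, and combining them with the $n$-antiperiodicity $\beta_{i-n} = -\beta_i$ derived in that same proof gives both $\varphi$-matches at once (and their equality is consistent with Proposition~\ref{EasyOne}(i)). The even identity $f_{i-m-1, i-2} = a_i$ comes from applying~\eqref{SuperHE} at index~$i$ to the $(i-m-1)$-th SE diagonal: the bottom boundary of the frieze gives $f_{i-m-1, i-1} = 1$ and $f_{i-m-1, i} = 0$, while $\varphi_{i-m-1, i-1} = -\beta_i$ from above, so the first row of~\eqref{SuperHE} collapses to $0 = a_i - f_{i-m-1, i-2} + \beta_i^2$, and $\beta_i^2 = 0$ completes the check.

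To see that $(\tilde f, \tilde\varphi)$ satisfies~\eqref{Rule}, observe that the eight entries of an elementary diamond of $(\tilde f, \tilde\varphi)$ based at~$(i,j)$ are, by direct substitution of the defining formulas, the eight entries $(A, B, C, D, \Xi, \Psi, \Phi, \Sigma)$ of the original elementary diamond based at $(j-m-1,\, i-2)$, with the relabelling
\[
\tilde A=A,\quad \tilde D=D,\quad \tilde B=C,\quad \tilde C=B,\qquad
\tilde\Xi=-\Phi,\quad \tilde\Psi=\Sigma,\quad \tilde\Phi=\Xi,\quad \tilde\Sigma=-\Psi.
\]
Substituting into the three equations of~\eqref{Rule} written for the tilde-entries, the first reduces to $AD - BC = 1 + \Psi\Phi$, which equals $1 + \Sigma\Xi$ by odd anticommutativity and the identity $\Xi\Sigma = \Phi\Psi$ noted just after~\eqref{Rule}; the second reduces to $B\Phi - A\Psi = \Xi$ and the third to $D\Phi - C\Psi = \Sigma$, both displayed there as consequences of~\eqref{Rule}. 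Hence $(\tilde f, \tilde\varphi)$ is a superfrieze, and Proposition~\ref{Easy}(i) forces $(\tilde f, \tilde\varphi) = (f, \varphi)$.

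The main obstacle is precisely this diamond bookkeeping: the half-integer shift of the $\varphi$-indices and the sign in the third defining formula mean that the image of a standard diamond in $(\tilde f, \tilde\varphi)$ is a relabelled and partially sign-flipped original diamond, so closing the three relations genuinely uses both odd anticommutativity and the quadratic identity $\Xi\Sigma = \Phi\Psi$. The first-row matching, by contrast, is essentially extracted from the computation already carried out in the proof of Lemma~\ref{PerCol}.
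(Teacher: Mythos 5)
Your proof is correct, but it takes a genuinely different route from the paper's. The paper proves the glide symmetry by comparing diagonals: by Lemmas~\ref{RecurLem} and~\ref{RecurLemDual}, the South-East and North-East diagonals satisfy Hill recurrences that differ only in the sign of the odd coefficients, and since for matching choices of $j$ they share the initial data $V_{-1}=0$, $V_0=1$, $W_0=0$, one gets $(\varphi_{j,i},f_{j,i})=(-\varphi_{i+\frac32,m+j+\frac32},\,f_{i+2,m+j+1})$, which Lemma~\ref{PerCol} then converts into~\eqref{GladeEq}. You instead build the glide-reflected array, check that the half-turn of an elementary diamond (with your sign flips on $\tilde\Xi$ and $\tilde\Sigma$) turns the relations~\eqref{Rule} into the equivalent relations $AD-BC=1+\Psi\Phi$, $B\Phi-A\Psi=\Xi$, $D\Phi-C\Psi=\Sigma$, match the first two rows via the near-boundary identities, and conclude by uniqueness from Proposition~\ref{Easy}(i); your bookkeeping is right (the tilde diamond with top $\tilde f_{i,j}$ is the original diamond with bottom $f_{j-m-1,i-2}$, flipped), and the computation $f_{i-m-1,i-2}=a_i$ is correct. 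Two points you leave implicit, both immediate but needed to invoke Proposition~\ref{Easy}(i): the reflected array has the correct bordering rows of $1$'s and $0$'s (the index map sends boundary rows to boundary rows), and it is generic, since its even entries are even entries of the original frieze. As for what each approach buys: the paper's argument explains the sign in~\eqref{GladeEq} conceptually as the $\beta\mapsto-\beta$ discrepancy between the SE and NE recurrences and needs no uniqueness statement, while yours stays at the level of the symmetry of the frieze rule itself, closer in spirit to Coxeter's original glide argument, at the price of heavier index bookkeeping and of borrowing the identities $\varphi_{i-m-1,i-1}=-\beta_i$, $\varphi_{i-m-\frac32,i-\frac32}=-\beta_{i-m-3}$ and $\beta_i=-\beta_{i-m-3}$ from the proof (not merely the statement) of Lemma~\ref{PerCol}.
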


\begin{proof}
This statement readily follows from Lemmas~\ref{RecurLem}, \ref{RecurLemDual} and~\ref{PerCol}.

Indeed, choosing $j=1$, the South-East diagonal $(W_i,V_i)=(\varphi_{1,i},\,f_{1,i})$
is determined by the recurrence~\eqref{SuperHE} and the initial condition
$$
V_{-1}=0,
\quad
V_0=1,
\qquad
W_0=0.
$$
On the other hand, choosing $j=m+2$, the North-East diagonal 
$(W^*_i,V^*_i)=(\varphi_{i+\frac32,m+\frac52},\,f_{i+2,m+2}),$
is determined by the recurrence~\eqref{SuperHEDual} and the same initial condition
$$
V^*_{-1}=0,
\quad
V^*_0=1,
\qquad
W^*_0=0.
$$
The two recurrence relations differ only by the sign of the odd coefficients,
therefore one has $(W_i,V_i)=(-W^*_i,V^*_i)$.
The arguments for arbitrary $j$ are similar, and we obtain
$$
(\varphi_{j,i},\,f_{j,i})=(-\varphi_{i+\frac32,m+j+\frac32},\,f_{i+2,m+j+1}).
$$
Finally, using the antiperiodicty of the whole pattern established in Lemma~\ref{PerCol}, 
we deduce the set of relations \eqref{GladeEq}.
\end{proof}

The above statement can be illustrated by the following diagram representing 
the diagonals of the superfrieze:
$$
\begin{array}{ccccccccccccccccccccccccccc}
&&&&&&&&&&&&&&&&\!\!{\color{red}0}&&\!\!{\color{red}0}&&\!\!{\color{red}0}\\
&&&&&&&&&&&&&&&\!\!1&&&&\!\!1\\
\!\!{\color{red}\a}&&\!\!{\color{red}\a'}&&&&&&&&&&&&
\!\!\!\!{\color{red}-\b}&&\!\!{\color{red}\b'}&&&&\!\!\!\!{\color{red}-\a}&&\!\!\!\!{\color{red}-\a'}\\
&\!\!a&&&&&&&&&&&&\!\!b&&&&&&&&\!\!a&\\
&&\!\!\ddots&&\!\!\ddots&&&&&&&&\!\!\iddots&&\!\!\iddots&&&&&&&&\!\!\ddots&&\!\!\ddots\\
&&&\!\!b&&&&&&&&\!\!a&&&&&&&&&&&&\!\!b\\
&&&&\!\!{\color{red}\b}&&\!\!{\color{red}\b'}&&&&\!\!\!\!\!{\color{red}-\a}&&\!\!{\color{red}\a'}
&&&&&&&&&&&&\!\!\!\!\!{\color{red}-\b}&&\!\!\!\!\!{\color{red}-\b'}\\
&&&&&\!\!1&&&&\!\!1&&&&\\
&&&&&&\!\!{\color{red}0}&&\!\!{\color{red}0}&&\!\!{\color{red}0}&&&&&&\\
\end{array}
$$

\subsection{The algebraic variety of superfriezes: isomorphism with~$\E_m$}\label{ISoSec}

The above properties of generic superfriezes motivate the following important definition
of the space of superfriezes that includes generic ones.

\begin{defn}
\label{AlgVSF}
The algebraic supervariety of superfriezes is the supervariety defined by
$2n$ even and $n$ odd polynomial equations in variables $(a_1,\ldots,a_n,\b_1,\ldots\b_n)$
expressing that the last three rows of the superfrieze consist in $1$'s and $0$'s.
More precisely, for all $j\in\Z$ and $m=n-3$, one has:
\begin{equation}
\label{Zamk}
f_{j,j+m}=1,
\qquad
f_{j,j+m+1}=0,
\qquad
\varphi_{j,j+m+1}=0,
\end{equation}
where $f_{j,i}$ and $\varphi_{j,i}$
are the polynomials defined by Proposition~\ref{Easy}, Part (ii).
\end{defn}

\noindent
Note that equations~\eqref{Zamk}, together with the frieze rule,
immediately imply $\varphi_{j+\frac12,j+m+\frac32}=0$.

It turns out that the algebraic supervarieties of friezes and that of supersymmetric 
Hill's equations~(\ref{SuperHE}) can be identified.

\begin{thm}
\label{ISOMTH}
The space of superfriezes of width $m$ is an algebraic supervariety
isomorphic to the supervariety $\E_{m+3}$.
\end{thm}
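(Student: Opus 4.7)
The plan is to exhibit mutually inverse morphisms between $\E_{m+3}$ and the supervariety of superfriezes of width $m$, using Proposition~\ref{Easy}: generic superfriezes are determined by their first two non-trivial rows $a_i=f_{i,i}$ and $\beta_i=\varphi_{i,i}$. Both supervarieties can be regarded as subvarieties of the affine superspace of sequences $(a_i,\beta_i)_{i=1}^n$ with $n=m+3$, and the goal is to show that the defining equations coincide: the closing conditions~(\ref{Zamk}) on one side, the monodromy condition $M_i=\mathrm{diag}(-1,-1,1)$ on the other.

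First I would relate the two sets of equations via a single computation. Iterating the recurrence~(\ref{RecForm}) exactly $n$ times starting from the initial vector $(f_{j,j-3},f_{j,j-2},\varphi_{j,j-2})^T=(-1,0,0)^T$ gives, by the very definition of the monodromy matrix~(\ref{MonEq}),
$$
M_{j-1}\cdot(-1,0,0)^T \;=\; (f_{j,j+m},\; f_{j,j+m+1},\; \varphi_{j,j+m+1})^T.
$$
Hence the closing conditions~(\ref{Zamk}) for a given $j$ are equivalent to the first column of $M_{j-1}$ being $(-1,0,0)^T$. Imposing the full monodromy condition $M_{j-1}=\mathrm{diag}(-1,-1,1)$ obviously implies this, which gives the inclusion $\E_{m+3}\hookrightarrow$ superfriezes (provided one has already checked that the constructed array satisfies the frieze rule, see below).

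For the reverse inclusion I would use Lemma~\ref{RecurLemDual}: the North-East diagonals solve the ``twin'' Hill equation~(\ref{SuperHEDual}) obtained from~(\ref{SuperHE}) by reversing the sign of $\beta_i$. Running the analogous closing argument on the North-East diagonals produces constraints on the first column of the monodromy of the twin equation, hence on the second and third columns of $M_{j-1}$ via the $\OSp(1|2)$-structure of the monodromy; combining these pins $M_{j-1}$ down completely. As a shortcut, one may instead invoke Proposition~\ref{HillProp}: since both $\E_{m+3}$ and the supervariety of superfriezes have the same dimension $m|(m+1)$ (the latter having codimension $3|2$ in $\Rc^{n|n}$, by the same count as~$\OSp(1|2)$), and the reading-off map superfrieze~$\mapsto(a_i,\beta_i)$ is injective by Proposition~\ref{Easy}, the inclusion already established automatically promotes to an isomorphism.

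The main technical obstacle lies in verifying the frieze rule~(\ref{Rule}) on every elementary diamond, which is needed to confirm that the polynomial entries defined by~(\ref{RecForm}) together with $\varphi_{j-\frac12,i+\frac12}=f_{j,i}\varphi_{j-1,i}-f_{j-1,i}\varphi_{j,i}$ actually assemble into a superfrieze. The three relations of~(\ref{Rule}) are Wronskian-type identities for the two adjacent South-East diagonals forming the diamond, both of which are solutions of~(\ref{SuperHE}); they can be proved by induction on the vertical position of the diamond, the base case lying at the top of the frieze where the odd entries vanish and the two solutions reduce to canonical initial data, and the inductive step relying only on the recurrence~(\ref{SuperHE}) and the supercommutativity of $\Rc$. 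The bookkeeping of signs and parities makes the argument delicate but ultimately routine.
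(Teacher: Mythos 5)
Your key identity --- iterating \eqref{RecForm} over one period so that the closing conditions \eqref{Zamk} become the statement that the monodromy sends $(-1,0,0)^T$ to $(1,0,0)^T$ --- is exactly the paper's relation \eqref{use}, and your forward direction ($\E_{m+3}\subseteq$ superfriezes) agrees with the paper. The gap is in the converse. Knowing only the first column of $M_j$, the $\OSp(1|2)$ relations do \emph{not} force $M_j=\mathrm{diag}(-1,-1,1)$: every matrix $\left(\begin{smallmatrix} -1 & b & -\beta\\ 0&-1&0\\ 0&\beta&1\end{smallmatrix}\right)$ with $b$ even and $\beta$ odd satisfies the $\OSp(1|2)$ relations and has first column $(-1,0,0)^T$. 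So additional input is needed, and neither of your two routes supplies it. The dimension-count shortcut is circular: that the supervariety of superfriezes has codimension $3|2$ is Corollary~\ref{DimProp}, a \emph{consequence} of Theorem~\ref{ISOMTH} (a priori \eqref{Zamk} is a system of $2n$ even and $n$ odd equations with no stated independence), and in any case an inclusion of supervarieties of equal dimension need not be an equality. The North-East route is not available as stated either: Lemma~\ref{RecurLemDual} is proved for \emph{generic} superfriezes, where the frieze rule holds and even entries are invertible, whereas a point of the variety of Definition~\ref{AlgVSF} is just a tuple $(a_i,\beta_i)$ whose entries are the polynomials built from the South-East recurrence; to invoke the twin equation you would first have to prove the frieze rule for these (possibly non-generic) arrays --- precisely the step you defer as ``routine'' --- and you never make precise how the twin monodromy determines the second and third columns of $M_j$.

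The missing step is short, and it is how the paper concludes. The closing conditions for the \emph{same} diagonal $j$, pushed one more step through \eqref{RecForm}, give $(f_{j,j+n-2},f_{j,j+n-1},\varphi_{j,j+n-1})=(0,-1,0)$, while the initial data \eqref{RecInitForm} propagate to $(f_{j,j-2},f_{j,j-1},\varphi_{j,j-1})=(0,1,0)$; substituting $i=j$ into \eqref{use} therefore yields $M_j(0,-1,0)^T=(0,1,0)^T$ in addition to the first-column condition. With the first two columns pinned down, the $\OSp(1|2)$ relations (membership of $M_j$ being automatic, as it is a product of the matrices $A_i$) do force the third column to be $(0,0,1)^T$, i.e.\ condition \eqref{MonCond}. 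Note also that with Definition~\ref{AlgVSF} both spaces are cut out inside the same affine superspace of coordinates $(a_1,\ldots,a_n,\beta_1,\ldots,\beta_n)$, so no separate verification of the frieze rule on elementary diamonds is required for the theorem itself; that issue belongs to the explicit bijection with genuine friezes, not to the identification of the two supervarieties.
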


\begin{proof}
By definition of $f_{j,i}$ and $\phi_{j,i}$, one has for all $i,j\in \Z$,
\begin{equation}\label{use}
\left(
\begin{array}{l}
f_{j,i+n-2}\\[2pt]
f_{j,i+n-1}\\[2pt]
\varphi_{j,i+n-1}
\end{array}
\right)=
M_i
\left(
\begin{array}{c}
f_{j,i-2}\\[2pt]
f_{j,i-1}\\[2pt]
\varphi_{j,i-1}
\end{array}
\right),
\end{equation}
 where $M_i$ is as in~\eqref{MonEq}.

Given Hill's equation with potential $\b_i+\xi{}a_i$,
the condition that the monodromy is as in~\eqref{MonCond} 
implies the relations~\eqref{Zamk}, by substituting $i=j-1$ into~\eqref{use}, and using~\eqref{RecInitForm}.

Conversely, assume that the variables $(a_1,\ldots,a_n,\b_1,\ldots\b_n)$ satisfy
the relations~\eqref{Zamk}.
Substituting $i=j-1$ and next $i=j$ into~\eqref{use}, one obtains, respectively
$$
M_{j}
\left(
\begin{array}{r}
\!\!-1\\
0\\
0
\end{array}
\right)
=\left(
\begin{array}{c}
1\\
0\\
0
\end{array}
\right),
\qquad
M_{j}
\left(
\begin{array}{r}
0\\
\!\!-1\\
0
\end{array}
\right)
=\left(
\begin{array}{r}
0\\
1\\
0
\end{array}
\right).
$$
for all $j\in\Z$.
Hence, $M_j$ is of the form
$$
M_j=
\left(
\begin{array}{rr|c}
-1&0&\g\\[4pt]
0&-1&\d\\[4pt]
\hline
0&0&e
\end{array}
\right).
$$
Finally, since $M_j\in \OSp(1|2)$, one deduces that $M_j$ is as in~\eqref{MonCond}. \end{proof}

Proposition~\ref{HillProp} now implies the following.

\begin{cor}
\label{DimProp}
The space of superfriezes of width $m$ is an algebraic supervariety
of dimension $m|m+1$.
\end{cor}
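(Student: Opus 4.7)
The plan is straightforward: this corollary is an immediate consequence of chaining together Theorem~\ref{ISOMTH} with Proposition~\ref{HillProp}, so the ``proof'' is essentially bookkeeping of dimensions under the isomorphism.

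First, I would invoke Theorem~\ref{ISOMTH} to identify the algebraic supervariety of superfriezes of width $m$ with the supervariety $\E_{m+3}$ of supersymmetric Hill's equations with period $n = m+3$. Since isomorphic algebraic supervarieties have equal super-dimensions, it suffices to compute the dimension of $\E_{m+3}$.

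Next, I would apply Proposition~\ref{HillProp}, which states that $\E_n$ has super-dimension $(n-3)|(n-2)$. Substituting $n = m+3$ yields dimension $m|(m+1)$, which is exactly the claim of the corollary.

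There is no genuine obstacle here, since both ingredients are already established: the identification of the spaces of friezes and Hill's equations has been carried out via the recurrence encoded in Proposition~\ref{Easy} (Part (ii)) and the monodromy characterization~\eqref{MonCond}, while the dimension count for $\E_n$ comes from the fact that imposing the monodromy to equal the fixed element of $\OSp(1|2)$ given in~\eqref{MonCond} cuts out a codimension $3|2$ subvariety of the $n|n$-dimensional ambient space of (anti)periodic potentials $(a_i,\b_i)$. The only thing worth double-checking is that the isomorphism of Theorem~\ref{ISOMTH} is indeed an isomorphism of algebraic supervarieties (and not merely a bijection on points), but this is guaranteed by the fact that both the map from potentials to frieze entries (via the polynomial recurrence of Proposition~\ref{Easy}) and its inverse (reading off the first two non-trivial rows) are given by polynomial expressions, so the identification preserves the supervariety structure and hence the super-dimension.
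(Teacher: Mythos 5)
Your proposal is correct and follows exactly the paper's route: the corollary is obtained by combining Theorem~\ref{ISOMTH} with Proposition~\ref{HillProp} and substituting $n=m+3$ to get dimension $m|m+1$. Your extra remark that the identification is polynomial in both directions is a reasonable sanity check but adds nothing beyond what the paper already relies on.
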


\subsection{Explicit bijection}
Given Hill's equation~(\ref{SuperHE}), let us define the corresponding superfrieze.
Fix $j\in\Z$, and choose a solution $(V^j_i+\xi{}W^j_i)$, $i\in\Z$, with the initial conditions
$
(V^j_{j-2}, W^j_{j-1}, V^j_{j-1})=
(0,0,1).
$
Form the superfrieze defined by 
$$
(\varphi_{j,i},f_{j,i}):=(W^j_i,V^j_i),
$$ 
for all $i,j\in\Z$.
Note that the odd entries with half-integer indices are defined by the frieze rule:
$$
\varphi_{j-\frac12,i+\frac12}=f_{j,i}\varphi_{j-1,i}-f_{j-1,i}\varphi_{j,i}.
$$
The chosen initial condition implies that $V^j_{j-3}=-1$ and $W_{j-2}=0$.

The (anti)periodicity condition~\eqref{PeriodSol} then reads:
$$
V^j_{j+n-3}=f_{j,j+n-3}=1,
\qquad
W^j_{j+n-2}=\varphi_{j,j+n-2}=0,
\qquad
V^j_{j+n-2}=f_{j,j+n-2}=0.
$$
Therefore, we obtain a point of the supervariety of superfriezes.

\subsection{Laurent phenomenon for superfriezes}
The following Laurent phenomenon occurs in the Coxeter friezes:
every entry can be expressed as a Laurent polynomial in the entries of any diagonal.
Example \ref{CEx} illustrates this property.
Similar phenomenon occurs in the
 superfriezes.

\begin{prop}
\label{Laurent}
Every entry of any superfrieze is a Laurent polynomial in 
the entries of any diagonal.
\end{prop}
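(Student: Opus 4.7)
My plan is to reduce the Laurent phenomenon to Proposition~\ref{Easy} by inverting the Hill recurrence along the chosen diagonal, so that the potential itself becomes a Laurent polynomial in the diagonal entries.

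First, I would fix a South-East diagonal and label its entries $(V_i, W_i) := (f_{j,i}, \varphi_{j,i})$ for the appropriate index $j$. By Lemma~\ref{RecurLem} these satisfy~\eqref{SuperHE}, and under the genericity assumption each $V_i$ is invertible. Solving the odd equation $W_i = W_{i-1} + \b_i V_{i-1}$ yields $\b_i = (W_i - W_{i-1})/V_{i-1}$, and then the even equation gives $a_i = (V_i + V_{i-2})/V_{i-1} + \b_i W_{i-1}/V_{i-1}$. Both expressions are Laurent polynomials in the diagonal, with denominators that are monomials in the even entries $V_i$ only, while the odd entries $W_i$ appear only polynomially (as they must in the super-Laurent setting, odd variables being nilpotent and hence non-invertible).

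I would then invoke Proposition~\ref{Easy}(ii), which expresses every even and odd entry of the superfrieze as a polynomial in $(a_i, \b_i)$; this includes the half-integer indexed odd entries $\varphi_{k-\frac12,\ell+\frac12}$, obtained via the bilinear identity $f_{k,\ell}\varphi_{k-1,\ell} - f_{k-1,\ell}\varphi_{k,\ell}$. Substituting the Laurent expressions for $(a_i, \b_i)$ found in the previous step shows that every entry of the frieze is a Laurent polynomial in the fixed South-East diagonal. The case of a North-East diagonal is identical after replacing Lemma~\ref{RecurLem} by Lemma~\ref{RecurLemDual}; the only change is a sign flip on the odd coefficients, which does not affect the argument.

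I do not anticipate a serious obstacle. The main subtlety is purely bookkeeping: one must verify at each step that the super-Laurent structure is preserved, i.e.\ that denominators only involve the even diagonal entries while the odd ones remain in the numerator. This is manifest in the explicit recovery formulas for $\b_i, a_i$, and is preserved automatically by the polynomial substitution from Proposition~\ref{Easy}.
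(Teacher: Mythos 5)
Your proposal is correct and follows essentially the same route as the paper: invert the Hill recurrence along the fixed diagonal to express $\b_i$ and $a_i$ as Laurent polynomials in the diagonal entries (with denominators only in the even entries), then invoke Proposition~\ref{Easy}, Part (ii) to conclude that every entry, being polynomial in the first two rows, is Laurent in the diagonal. Your extra remarks on North-East diagonals via Lemma~\ref{RecurLemDual} and on the odd entries staying in the numerator are just added bookkeeping, not a different argument.
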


\begin{proof}
Let us fix a South-East diagonal $(W_i,V_i)$ in the superfrieze.
By Lemma~\ref{RecurLem}, one can express the first rows as
$$
\b_i=\frac{W_{i-1}-W_i}{V_{i-1}},
\qquad
a_i=\frac{V_i+V_{i-2}-\b_iW_{i-1}}{V_{i-1}}.
$$
Therefore, the first two rows are Laurent polynomials in $(W_i,V_i),\;i\in\Z$.
All the entries of the superfrieze are polynomials in the first two rows.
Hence the proposition.
\end{proof}

\section{Open problems}\label{OPSEc}

Here we formulate a series of problems naturally arising in the 
study of superfriezes and supersymmetric difference equations.

\subsection{Supervariety $\E_n$}

Our first two problems concern an explicit form of the equations
characterizing the supervariety $\E_n$.

\begin{pb}
\label{PbOne}
Determine the formula for the entries of a superfrieze\footnote{A solution
to this problem has been given by Alexey Ustinov, see Appendix~2.}.
\end{pb}

In other words, the problem consists in calculating ``supercontinuants''.
The first examples are:
$$
\begin{array}{l}
K(a_i,\b_i)=a_i,
\qquad
K(a_i,a_{i+1},\b_i,\b_{i+1})=a_ia_{i+1}-1+\b_i\b_{i+1},\\[6pt]
K(a_i,\ldots,\b_{i+2})=
a_ia_{i+1}a_{i+2}-a_{i}-a_{i+2}+
\b_ia_{i+1}a_{i+2}+a_ia_{i+1}\b_{i+2}+\b_i\b_{i+2},
\end{array}
$$
cf. Example~\ref{PolyEx}.
Is there a determinantal formula (using Berezinians)
analogous to the classical continuants?

The next question concerns the odd entries of a superfrieze.

\begin{pb}
Do the odd variables $\b_i$ of the first odd row 
satisfy a system of linear equations generalizing the systems of Section~\ref{MonEx}?
\end{pb}

Examples considered in Section~\ref{MonEx} show that, for small values of $n$,
the variables $\b_i$ satisfy linear systems with matrices given by the purely even
Coxeter frieze patterns obtained by projection of superfriezes.

In this paper, we do not investigate the geometric meaning of
superfriezes and Hill's equations.
Recall that classical Hill's equations and Coxeter's friezes are related to the spaces 
$\mathcal{M}_{0,n}$;
see Section~\ref{ClSec} and~\cite{SVRS,Sop}.
We believe that the situation is similar in the supercase.

\begin{pb}
Does the algebraic supervariety $\E_n$ contain the superspace $\gM_{0,n}$
(see~\cite{Wit}) as an open dense subvariety?
\end{pb}

\noindent
An important role in the geometric interpretation of
superfriezes must be played by the super cross-ratio, see e.g.~\cite{MD}.

\subsection{Operators of higher orders}

In this paper, we do not consider the general theory of supersymmetric
difference operators.
We believe that such a theory can be constructed with the help
of the shift operator $\gT$, see formula~(\ref{OperT}),
and formulate here a problem to develop such a theory in full generality.
The corresponding theory of superfriezes must generalize the notion
of $\SL_k$-friezes, see~\cite{ARS,SVRS}.

To give an example, we investigate the next interesting case after the 
Sturm-Liouville operators, namely the operators of order $\frac{5}{2}$.
We omit the details of computations.

In the continuous case, the operators we consider are of the form
$$
D^5+F(x,\xi)D+G(x,\xi),
$$
where the $\Rc$-valued functions $F(x,\xi)$ and $G(x,\xi)$
(for some supercommutative ring $\Rc=\Rc_{\bar0}\oplus\Rc_{\bar1}$) are 
even and odd, respectively.

The discrete version is as follows
$$
\gT^5+\gT^4+U\gT^3+V\gT^2-\Pi,
$$
and the corresponding equation written in the matrix form is as follows:
$$
\left(
\begin{array}{l}
V_{i-2}\\[4pt]
V_{i-1}\\[4pt]
V_i\\[4pt]
W_{i-1}\\[4pt]
W_i
\end{array}
\right)=
\left(
\begin{array}{ccc|cc}
0&1&0&0&0\\[4pt]
0&0&1&0&0\\[4pt]
1&-a_i'&a_i&0&\b_i\\[4pt]
\hline
0&0&0&0&1\\[4pt]
0&0&\b_i'&-1&a_i'-1
\end{array}
\right)
\left(
\begin{array}{l}
V_{i-3}\\[4pt]
V_{i-2}\\[4pt]
V_{i-1}\\[4pt]
W_{i-2}\\[4pt]
W_{i-1}
\end{array}
\right),
$$
where 
$$
F_i=a_i'-1+\xi(\b_i+\b_i'),
\qquad
G_i=\b_i'+\xi{}a_i,
$$
with $a_i,a'_i$ and $\b_i,\b'_i$ arbitrary periodic coefficients.
The periodicity condition in this case should be:
$$
V_{i+n}=V_i,\qquad
W_{i+n}=-W_i.
$$ 

We conjecture that the above difference equations
correspond to a variant of superfriezes analogous to the $2$-friezes, see~\cite{MGOT,MG}.

\section*{Appendix~1: Elements of superalgebra}

To make the paper self-contained, we briefly describe several elementary notions
of superalgebra and supergeometry used above.
For more details, we refer to the classical sources~\cite{Ber,Lei,Man,Man1}.

\medskip
\noindent
{\bf Supercommutative algebras.}
Let Latin letters denote even variables,
and Greek letters the odd ones.
Consider algebras of polynomials
$\mathbb{K}[x_1,\ldots,x_n,\xi_1,\ldots,\xi_k]$,
where $\mathbb{K}=\R,\C$, or some other supercommutative ring, and where $x_i$ are standard
commuting variables, while the odd variables 
$\xi_i$ commute with $x_i$ and anticommute with each other:
$$
\xi_i\xi_j=-\xi_j\xi_i,
$$ 
for all $i,j$; in particular, $\xi_i^2=0$.
Every supercommutative algebra is a quotient of a polynomial algebra by some ideal.
Every supercommutative algebra is the algebra of regular functions on an
algebraic supervariety (which can be taken for a definition of the latter notion).
Every Lie superalgebra is the algebra of derivations of a supercommutative algebra,
for instance, vector fields are derivations of the algebra
of regular functions on an algebraic supervariety.

An example of supercommutative algebra is the Grassmann algebra
of differential forms on a vector space.
Let $(x_1,\ldots,x_n)$ be coordinates, and $(dx_1,\ldots,dx_n)$
their differentials, one
replaces all the differentials $dx_i$ by the odd variables $\xi_i$,
to obtain an isomorphic algebra.

We often need to calculate rational functions
with odd variables.
The main ingredient is the obvious formula
$(1+\xi)^{-1}=1-\xi$. 
For instance, we have:
$$
\frac{y}{x+\xi}=\frac{y}{x(1+\xi/x)}=\frac{y}{x}-\xi\frac{y}{x^2}.
$$

\medskip
\noindent
{\bf The supergroup $\OSp(1|2)$.}
The supergroup $\OSp(1|2)$ 
is isomorphic to the supergroup of linear symplectic transformations
of the ${2|1}$-dimensional space equipped with the symplectic form
$$
\om=dp\wedge{}dq+\frac{1}{2}d\t\wedge{}d\t,
$$
where $p,q,\t$ are linear coordinates.

Let $\Rc=\Rc_{\bar0}\oplus\Rc_{\bar1}$ be a commutative ring.
The set of $\Rc$-points of the
supergroup $\OSp(1|2)$  is the following $3|2$-dimensional supergroup of matrices
with entries in $\Rc$:
$$
\left(
\begin{array}{cc|c}
a&b&\g\\[4pt]
c&d&\d\\[4pt]
\hline
\a&\b&e
\end{array}
\right)
\qquad
\hbox{such that}
\qquad
\left\{
\begin{array}{rcl}
ad-bc&=&1-\a\b,\\[4pt]
e&=&1+\a\b,\\[4pt]
-a\d+c\g&=&\a\\[4pt]
-b\d+d\g&=&\b,
\end{array}
\right.
$$
where $a,b,c,d,e\in\Rc_{\bar0}$, and $\a,\b,\g,\d\in\Rc_{\bar1}$.
For properties and applications of this supergroup, see~\cite{Man,GLS}.
Note that the above relations also imply: 
$$
\g=a\b-b\a,
\quad
\d=c\b-d\a,
$$
and $\a\b=\g\d$.

\medskip
\noindent
{\bf Left-invariant vector fields on $\R^{1|1}$ and supersymmetric
linear differential operators.}
Consider the space $\R^{1|1}$ with linear coordinates $(x,\xi)$.
We understand the algebra of algebraic functions on this space as the algebra of polynomials 
in one even and one odd variables:
$$
F(x,\xi)=F_0(x)+\xi{}F_1(x),
$$
where $F_0$ and $F_1$ are usual polynomials in $x$.

The following two vector fields,
characterized by Shander's superversion 
of the rectifiability of vector fields theorem~\cite{Sha}
$$
X=\frac{\partial}{\partial{}x},
\qquad
D=\frac{\partial}{\partial{}\xi}-\xi\frac{\partial}{\partial{}x}
$$
are important in superalgebra and supergeometry.
These vector fields are left-invariant with respect
to the supergroup structure on $\R^{1|1}$ given by
the following multiplication of $\Rc$-points:
$$
(r,\l)\cdot(s,\m)=(r+s+\l\m,\,\l+\m).
$$
Moreover, $X$ and $D$ are characterized
(up to a constant factor) by the property of left-invariance,
as the only even and odd left-invariant vector fields on $\R^{1|1}$,
respectively.

The vector fields $X$ and $D$ form a $1|1$-dimensional Lie superalgebra since 
$$
D^2=\frac{1}{2}[D,D]=-X,
$$
and $[X,D]=0$, with one odd generator $D$.

The space $\R^{1|1}$ equipped with the vector field $D$ is often called by
physicists the $1|1$-dimensional ``superspacetime''.
A {\it supersymmetric} differential operator on $\R^{1|1}$
is an operator that can be expressed as a polynomial in~$D$.

\section*{Appendix~2: Supercontinuants (by Alexey Ustinov)}

This Appendix gives a solution to  Problem~\ref{PbOne}: 
determine the formula for the entries of a superfrieze.

Let $\Rc=\Rc_{\bar0}\oplus\Rc_{\bar1}$ be an arbitrary supercommutative
ring, and the sequences $\{v_i\}$, $\{w_i\}$, with $v_i\in\Rc_{\bar0}$, $w_i\in\Rc_{\bar1}$, be defined by the initial conditions $v_{-1}=0$, $v_0=1$, $w_0=0$ and the recurrence relation
\begin{equation}
\label{1}
v_i=a_iv_{i-1}-v_{i-2}-\beta_iw_{i-1},\quad w_i=w_{i-1}+\beta_iv_{i-1}\quad(i\in \mathbb{Z}).
\end{equation}
In particular,
\begin{gather*}
v_1=a_1,\quad v_2=a_1a_2-1+\beta_1\beta_2,\quad v_3=a_1a_2a_3-a_1-a_3+a_1\beta_2\beta_3+a_3\beta_1\beta_2+\beta_1\beta_3;\\[6pt]
w_1=\beta_1,\quad w_2=a_1\beta_2+\beta_1,\quad w_3= a_1a_2\beta_3+a_1\beta_2+\beta_1\beta_2\beta_3+\beta_1-\beta_3.
\end{gather*}
The problem is to express $v_n$, $w_n$ in terms of $a_1$, \ldots, $a_n$ and $\beta_1$, \ldots, $\beta_n$. Such expression will be called  \textit{supercontinuants}. 
(For the properties of the classical continuants, see~\cite{Knu}.)

We define two sequences of supercontinuants $$\{K\bigl(\begin{smallmatrix} a_1\\
\begin{smallmatrix} \beta_1 & \beta_1
\end{smallmatrix}
\end{smallmatrix}|\begin{smallmatrix}
a_2\\\begin{smallmatrix} \beta_2 & \beta_2
\end{smallmatrix}
\end{smallmatrix}|\ldots |\begin{smallmatrix}
a_n\\\begin{smallmatrix} \beta_n & \beta_n
\end{smallmatrix}
\end{smallmatrix}\bigr)\}\text{\quad and \quad} \{K\bigl(\begin{smallmatrix}  a_1\\\begin{smallmatrix}
\beta_1 & \beta_1
\end{smallmatrix}
\end{smallmatrix}|\ldots |\begin{smallmatrix}
a_{n-1}\\\begin{smallmatrix} \beta_{n-1} & \beta_{n-1}
\end{smallmatrix}
\end{smallmatrix}|\beta_n\bigr)\}$$ by the initial conditions  $K()=1$, $K(\begin{smallmatrix} a_1\\
\begin{smallmatrix} \beta_1 & \beta_1
\end{smallmatrix}
\end{smallmatrix})=a_1$, $K(\beta_1)=\beta_1$  and the recurrence relations
\begin{gather}\nonumber
K\bigl(\begin{smallmatrix} a_1\\[6pt]
	\begin{smallmatrix} \beta_1 & \beta_1
	\end{smallmatrix}
\end{smallmatrix}|\ldots |\begin{smallmatrix}
a_n\\\begin{smallmatrix} \beta_n & \beta_n
\end{smallmatrix}
\end{smallmatrix}\bigr)=a_nK\bigl(\begin{smallmatrix} a_1\\[6pt]
\begin{smallmatrix} \beta_1 & \beta_1
\end{smallmatrix}
\end{smallmatrix}|\ldots |\begin{smallmatrix}
a_{n-1}\\\begin{smallmatrix} \beta_{n-1} & \beta_{n-1}
\end{smallmatrix}
\end{smallmatrix}\bigr)-K\bigl(\begin{smallmatrix} a_1\\
\begin{smallmatrix} \beta_1 & \beta_1
\end{smallmatrix}
\end{smallmatrix}|\ldots |\begin{smallmatrix}
a_{n-2}\\\begin{smallmatrix} \beta_{n-2} & \beta_{n-2}
\end{smallmatrix}
\end{smallmatrix}\bigr)
\\
\label{2}-\beta_nK\bigl(\begin{smallmatrix} a_1\\
	\begin{smallmatrix} \beta_1 & \beta_1
	\end{smallmatrix}
\end{smallmatrix}|\ldots |\begin{smallmatrix}
a_{n-2}\\\begin{smallmatrix} \beta_{n-2} & \beta_{n-2}
\end{smallmatrix}
\end{smallmatrix}|\beta_{n-1}\bigr),\\
\nonumber K\bigl(\begin{smallmatrix}  a_1\\\begin{smallmatrix}
		\beta_1 & \beta_1
	\end{smallmatrix}
\end{smallmatrix}|\ldots |\begin{smallmatrix}
a_{n-1}\\\begin{smallmatrix} \beta_{n-1} & \beta_{n-1}
\end{smallmatrix}
\end{smallmatrix}|\beta_n\bigr)=\beta_nK\bigl(\begin{smallmatrix}  a_1\\\begin{smallmatrix}
	\beta_1 & \beta_1
\end{smallmatrix}
\end{smallmatrix}|\ldots |\begin{smallmatrix}
a_{n-1}\\\begin{smallmatrix} \beta_{n-1} & \beta_{n-1}
\end{smallmatrix}
\end{smallmatrix}\bigr)+K\bigl(\begin{smallmatrix}  a_1\\\begin{smallmatrix}
	\beta_1 & \beta_1
\end{smallmatrix}
\end{smallmatrix}|\ldots |\begin{smallmatrix}
a_{n-2}\\\begin{smallmatrix} \beta_{n-2} & \beta_{n-2}
\end{smallmatrix}
\end{smallmatrix}|\beta_{n-1}\bigr).
\end{gather}
From~\eqref{1} and~\eqref{2} it easily
follows that
\begin{equation*}
v_n=K\bigl(\begin{smallmatrix} a_1\\
	\begin{smallmatrix} \beta_1 & \beta_1
	\end{smallmatrix}
\end{smallmatrix}|\ldots |\begin{smallmatrix}
a_n\\\begin{smallmatrix} \beta_n & \beta_n
\end{smallmatrix}
\end{smallmatrix}\bigr), \qquad
w_n=K\bigl(\begin{smallmatrix}  a_1\\\begin{smallmatrix}
		\beta_1 & \beta_1
	\end{smallmatrix}
\end{smallmatrix}|\ldots |\begin{smallmatrix}
a_{n-1}\\\begin{smallmatrix} \beta_{n-1} & \beta_{n-1}
\end{smallmatrix}
\end{smallmatrix}|\beta_n\bigr).
\end{equation*}

The classical continuants $K(a_1,\ldots,a_n)$,
corresponding to reduced regular continued fractions
$$a_1-\cfrac{1}{a_2-{\atop\ddots\,\displaystyle{-\cfrac{1}{a_n}}}},$$ are defined by
$$K()=1,\quad K(a_1)=a_1,\quad K(a_1,\ldots,a_n)=a_nK(a_1,\ldots,a_{n-1})-K(a_1,\ldots,a_{n-2}).$$
There is Euler's rule which allows one to write down all summands of $K(a_1,\ldots,a_n)$: starting with
the product $a_1 a_2 \ldots a_n$, we strike out adjacent pairs $a_ia_{i+1}$ in all
possible ways. If a pair $a_ia_{i+1}$ is struck out, then it must be replaced by $-1$. 
We can represent Euler's rule graphically by constructing all
``Morse code'' sequences of dots and dashes having length $n$, where each dot
contributes $1$ to the length and each dash contributes $2$. For example $K(a_1,a_2,a_3,a_4)$ consists of the following summands:
\begin{center}
                                        \begin{tikzpicture}[scale=.6]
                                        \draw  (2,0) -- (3,0);\draw  (0,1) -- (1,1);

                                        \filldraw (0,0) circle (0.07);\filldraw (0,1) circle (0.07);\filldraw (0,2) circle (0.07);
                                        \filldraw (1,0) circle (0.07);\filldraw (1,1) circle (0.07);\filldraw (1,2) circle (0.07);
                                        \filldraw (2,0) circle (0.07);\filldraw (2,1) circle (0.07);\filldraw (2,2) circle (0.07);
                                        \filldraw (3,0) circle (0.07);\filldraw (3,1) circle (0.07);\filldraw (3,2) circle (0.07);

                                        \filldraw [right] (5,1) node {$\mapsto -a_3a_4$};
                                        \filldraw [right] (5,0) node {$\mapsto -a_1a_2$};
                                        \filldraw [right] (5,2) node {$\mapsto a_1a_2a_3a_4$};

                                        \begin{scope}[scale=1, xshift=280]

                                        \filldraw (0,1) circle (0.07);\filldraw (0,2) circle (0.07);
                                        \filldraw (1,1) circle (0.07);\filldraw (1,2) circle (0.07);
                                        \filldraw (2,1) circle (0.07);\filldraw (2,2) circle (0.07);
                                        \filldraw (3,1) circle (0.07);\filldraw (3,2) circle (0.07);

                                        \draw  (0,1) -- (1,1);\draw  (2,1) -- (3,1);\draw  (2,2) -- (1,2);

                                        \filldraw [right] (5,1) node {$\mapsto 1$};
                                        \filldraw [right] (5,2) node {$\mapsto -a_1a_4$};

                                        \end{scope}
                                        \end{tikzpicture}
\end{center}

By analogy with  Euler's rule, we can construct a similar rule for calculation of supercontinuants.

\begin{thm}
\label{Thm1U}
The summands of $K\bigl(\begin{smallmatrix} a_1\\
\begin{smallmatrix} \beta_1 & \beta_1
\end{smallmatrix}
\end{smallmatrix}|\begin{smallmatrix}
a_2\\\begin{smallmatrix} \beta_2 & \beta_2
\end{smallmatrix}
\end{smallmatrix}|\ldots \bigr)$ can be obtained from the product $\beta_1\beta_1\beta_2\beta_2\ldots$ by the following rule:  we strike out adjacent pairs and adjacent 4-tuples $\beta_i\beta_i\beta_{i+1}\beta_{i+1}$ in all
possible ways; for deleted  pairs  and 4-tuples we make the substitutions
$\beta_i\beta_i\to a_i$, $\beta_i \beta_{i+1}\to 1$, $\beta_i\beta_i\beta_{i+1}\beta_{i+1}\to -1.$
\end{thm}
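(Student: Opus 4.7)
The plan is to prove the theorem by induction on $n$, simultaneously with the analogous statement for the companion supercontinuants $K(\ldots|\beta_n)=w_n$, whose Euler rule is read from the truncated string $\beta_1\beta_1\beta_2\beta_2\ldots\beta_{n-1}\beta_{n-1}\beta_n$ under the same substitution rules. Let $E_n$ denote the Euler-type combinatorial sum for the full string $\beta_1\beta_1\ldots\beta_n\beta_n$, and $E'_n$ the analogous sum for the truncated string. The base cases $E_0=1$, $E_1=a_1$ and $E'_1=\beta_1$ match the initial values declared in Appendix~2, so it suffices to verify that $E_n$ and $E'_n$ satisfy the two recurrences \eqref{2}; the identities $E_n=v_n$ and $E'_n=w_n$ will then follow by induction.

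To obtain the recurrence for $E_n$, I would classify the decompositions of $\beta_1\beta_1\ldots\beta_n\beta_n$ according to the block covering the rightmost positions. Since $\beta_n$ occurs only at positions $2n-1$ and $2n$, there are exactly three possibilities that produce nonzero summands. Case~(a): positions $2n-1,2n$ form the pair $\beta_n\beta_n\mapsto a_n$, leaving a decomposition of $\beta_1\beta_1\ldots\beta_{n-1}\beta_{n-1}$ and contributing $a_nE_{n-1}$. Case~(b): positions $2n-3,2n-2,2n-1,2n$ form the 4-tuple $\beta_{n-1}\beta_{n-1}\beta_n\beta_n\mapsto -1$, contributing $-E_{n-2}$. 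Case~(c): position $2n$ is uncovered; then position $2n-1$ must pair with $\beta_{n-1}$ at position $2n-2$ via $\beta_{n-1}\beta_n\mapsto 1$ (otherwise two uncovered adjacent $\beta_n$'s would produce $\beta_n^2=0$), leaving a decomposition of $\beta_1\beta_1\ldots\beta_{n-2}\beta_{n-2}\beta_{n-1}$ trailed by the loose factor $\beta_n$. The contribution is $E'_{n-1}\cdot\beta_n=-\beta_n E'_{n-1}$ by supercommutativity, since $E'_{n-1}$ is odd. Summing the three cases gives exactly the first recurrence of \eqref{2}.

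For $E'_n$, an analogous classification of the last position $2n-1$ has two outcomes: either $\beta_n$ is uncovered (the rest of the string contributes $E_{n-1}$, and since $E_{n-1}$ is even one has $E_{n-1}\cdot\beta_n=\beta_n E_{n-1}$), or $\beta_n$ is paired with $\beta_{n-1}$ at position $2n-2$ via $\beta_{n-1}\beta_n\mapsto 1$ (the rest of the string contributes $E'_{n-1}$). This yields $E'_n=\beta_n E_{n-1}+E'_{n-1}$, matching the second recurrence of \eqref{2}. The induction then closes.

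The main obstacle is the sign bookkeeping in case~(c): because $E'_{n-1}$ is odd, moving the trailing $\beta_n$ across it produces precisely the minus sign needed to match the $-\beta_n\tilde K_{n-1}$ term in \eqref{2}; getting this wrong would collapse the whole induction. A secondary point requiring care is that the 4-tuple block $\beta_{n-1}\beta_{n-1}\beta_n\beta_n\mapsto -1$ is a genuinely different block from the pair-of-pairs $(\beta_{n-1}\beta_{n-1})(\beta_n\beta_n)\mapsto a_{n-1}a_n$: both decompositions must be counted, exactly once each, and contribute different signs. Once these two points are handled, the remainder of the argument reduces to checking that the case analysis at the right end of the string is exhaustive and disjoint, which is immediate from the definition of admissible decompositions.
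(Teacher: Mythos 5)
Your proposal is correct and follows exactly the route the paper indicates: the authors state that Theorem~\ref{Thm1U} "follows by induction from recurrence relations~\eqref{2}" without giving details, and your argument is precisely that induction, carried out jointly for the even supercontinuants and the companion ones ending in $\beta_n$, with the case analysis at the right end of the string and the parity sign in the $-\beta_n$ term handled correctly.
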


This rule can be represented graphically as well. To each monomial there corresponds a sequence of  total length $2n$ (or $2n-1$) consisting of dots (of the length one), dashes (of the length two) and long dashes (of the length four). For example, the monomials of $K\bigl(\begin{smallmatrix} a_1\\
\begin{smallmatrix} \beta_1 & \beta_1
\end{smallmatrix}
\end{smallmatrix}|\begin{smallmatrix}
a_2\\\begin{smallmatrix} \beta_2 & \beta_2
\end{smallmatrix}
\end{smallmatrix}|\beta_{3}\bigr)$ can be obtained from the product $\beta_1\beta_1 \beta_2\beta_2\beta_{3}$ as follows:
\begin{center}
\begin{tikzpicture}[scale=.6]
\draw  (0,1) -- (1,1);\draw  (3,1) -- (4,1);\draw  (0,2) -- (1,2);\draw  (2,2) -- (3,2);

\filldraw (0,0) circle (0.07);\filldraw (0,1) circle (0.07);\filldraw (0,2) circle (0.07);
\filldraw (1,0) circle (0.07);\filldraw (1,1) circle (0.07);\filldraw (1,2) circle (0.07);
\filldraw (2,0) circle (0.07);\filldraw (2,1) circle (0.07);\filldraw (2,2) circle (0.07);
\filldraw (3,0) circle (0.07);\filldraw (3,1) circle (0.07);\filldraw (3,2) circle (0.07);
\filldraw (4,0) circle (0.07);\filldraw (4,1) circle (0.07);\filldraw (4,2) circle (0.07);

\filldraw [right] (5,1) node {$\mapsto a_1\beta_2$}; \draw  (1,0) -- (2,0);    \filldraw [right] (5,0) node {$\mapsto\beta_1\beta_2\beta_3$};
\filldraw [right] (5,2) node {$\mapsto a_1a_2\beta_3$};

\begin{scope}[scale=1, xshift=280]

\filldraw (0,1) circle (0.07);\filldraw (0,2) circle (0.07);
\filldraw (1,1) circle (0.07);\filldraw (1,2) circle (0.07);
\filldraw (2,1) circle (0.07);\filldraw (2,2) circle (0.07);
\filldraw (3,1) circle (0.07);\filldraw (3,2) circle (0.07);
\filldraw (4,1) circle (0.07);\filldraw (4,2) circle (0.07);

\draw  (1,2) -- (2,2);\draw  (3,2) -- (4,2);\draw  (0,1) -- (3,1);

\filldraw [right] (5,1) node {$\mapsto-\beta_3$};
\filldraw [right] (5,2) node {$\mapsto\beta_1$};

\end{scope}
\end{tikzpicture}
\end{center}
Let us note that the odd
variables anticommute with each other. In particular, $\beta_i^2=0$, and in each pair $\beta_i\beta_i$ at least one variable must be struck out. Supercontinuants become the usual continuants if all odd variables are replaced by zeros.

Supercontinuants can be expressed as determinants.

\begin{thm}
\label{Thm2U}
\begin{gather}\label{3}
                                   K\bigl(\begin{smallmatrix} a_1\\
                                   	\begin{smallmatrix} \beta_1 & \beta_1
                                   	\end{smallmatrix}
                                   \end{smallmatrix}|\ldots |\begin{smallmatrix}
                                   a_n\\\begin{smallmatrix} \beta_n & \beta_n
                                   \end{smallmatrix}
                                \end{smallmatrix}\bigr)=\begin{vmatrix}
                                        a_1 & -1+\beta_1\beta_2 & \beta_1\beta_3 & \cdots & \beta_1\beta_{n-1} & \beta_1\beta_n \\
                                        -1 & a_2 & -1+\beta_2\beta_3 & \cdots & \beta_2\beta_{n-1} & \beta_2\beta_n \\
                                        0 & -1 & a_3 & \cdots  & \beta_3\beta_{n-1} & \beta_3\beta_n  \\
                                        \cdots & \cdots  & \cdots & \cdots & \cdots & \cdots \\
                                        0 & \cdots & 0 & -1 & a_{n-1} & -1+\beta_{n-1}\beta_n \\
                                        0 & 0  & \cdots & 0 & -1 & a_n \\
                                        \end{vmatrix},\\[10pt]
                                          \nonumber
                                          K\bigl(\begin{smallmatrix}  a_1\\\begin{smallmatrix}
                                        		\beta_1 & \beta_1
                                        	\end{smallmatrix}
                                        \end{smallmatrix}|\ldots |\begin{smallmatrix}
                                        a_{n-1}\\\begin{smallmatrix} \beta_{n-1} & \beta_{n-1}
                                        \end{smallmatrix}
                                    \end{smallmatrix}|\beta_n\bigr)=\begin{vmatrix}
                                        a_1 & -1+\beta_1\beta_2 & \beta_1\beta_3 & \cdots & \beta_1\beta_{n-1} & \beta_1\\
                                        -1 & a_2 & -1+\beta_2\beta_3 & \cdots & \beta_2\beta_{n-1} & \beta_2 \\
                                        0 & -1 & a_3 & \cdots  & \beta_3\beta_{n-1} & \beta_3\\
                                        \cdots & \cdots  & \cdots & \cdots & \cdots & \cdots \\
                                        0 & \cdots & -1 & a_{n-2} & -1+\beta_{n-2}\beta_{n-1}  & \beta_{n-2}\\
                                        0 & \cdots & 0 & -1 & a_{n-1} & \beta_{n-1}\\
                                        0 & 0  & \cdots & 0 & -1 & \beta_n \\
                                        \end{vmatrix}.
                                        \end{gather}
\end{thm}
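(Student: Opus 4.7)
Set $D_n$ and $E_n$ to denote the two $n\times n$ determinants appearing in the statement. I will prove $D_n = v_n$ and $E_n = w_n$ by simultaneous induction on $n$, with base cases $n=1,2$ verified by direct computation. The strategy is to show that $D_n,E_n$ satisfy the same pair of recurrences as do $v_n,w_n$ by~\eqref{1}:
\begin{equation*}
D_n = a_n D_{n-1} - D_{n-2} - \beta_n E_{n-1}, \qquad E_n = \beta_n D_{n-1} + E_{n-1}.
\end{equation*}
A crucial initial observation is that \emph{every} entry of the matrix defining $D_n$ is even, because each off-diagonal odd-looking term is a product of two $\beta_i$'s; consequently multilinearity and cofactor expansion apply to $D_n$ without any supercommutative subtlety. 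The matrix defining $E_n$ has all even entries except in its last column, whose entries are odd, but expansion along that column is still legitimate since each resulting minor has purely even entries.

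For the first recurrence, I expand $D_n$ along the last row $(0,\ldots,0,-1,a_n)$ to obtain $D_n = a_n D_{n-1} + \det M'$, where $M'$ is the $(n-1)\times(n-1)$ matrix obtained from the $D_n$-matrix by striking out row $n$ and column $n-1$. The last column of $M'$ is $(\beta_1\beta_n,\ldots,\beta_{n-2}\beta_n,-1+\beta_{n-1}\beta_n)^\top$; I split it additively as $(\beta_1\beta_n,\ldots,\beta_{n-1}\beta_n)^\top + (0,\ldots,0,-1)^\top$, and use multilinearity. The contribution from $(0,\ldots,0,-1)^\top$ collapses, via cofactor expansion along the new column, to $-D_{n-2}$ (the leading $(n-2)\times(n-2)$ block is exactly the matrix defining $D_{n-2}$). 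For the contribution from $(\beta_1\beta_n,\ldots,\beta_{n-1}\beta_n)^\top$, I expand along the column and observe that each cofactor is an even scalar; using $\beta_i\beta_n = -\beta_n\beta_i$ and the centrality of even elements, I pull $\beta_n$ out on the left to obtain $-\beta_n\det P$, where $P$ is the $(n-1)\times(n-1)$ matrix whose first $n-2$ columns match those of $M'$ and whose last column is $(\beta_1,\ldots,\beta_{n-1})^\top$. A direct column-by-column comparison shows $P$ is the matrix defining $E_{n-1}$, hence $\det P = E_{n-1}$. Combining the two contributions yields $\det M' = -D_{n-2} - \beta_n E_{n-1}$, as required.

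For the second recurrence, I expand $E_n$ along its last column $(\beta_1,\ldots,\beta_n)^\top$. The $i=n$ cofactor produces $\beta_n D_{n-1}$, since deleting the last row and column recovers the $D_{n-1}$-matrix. The remaining sum $\sum_{i=1}^{n-1}(-1)^{i+n}\beta_i\det(\text{minor}_i)$ I recognize as the cofactor expansion along its last column of the auxiliary matrix $R_n$ obtained from the $E_n$-matrix by replacing the $(n,n)$-entry $\beta_n$ by $0$. But the last row of $R_n$ is now $(0,\ldots,0,-1,0)$; re-expanding $\det R_n$ along this sparse row collapses it to a single $(n-1)\times(n-1)$ determinant that, by the same column-comparison as before, is precisely $E_{n-1}$. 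Hence $E_n = \beta_n D_{n-1} + E_{n-1}$, closing the induction.

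The main point requiring care is the bookkeeping of signs from the $\mathbb{Z}_2$-grading whenever the odd factors $\beta_i$, $\beta_n$ are extracted from or commuted past even subdeterminants. The argument goes through because in every step each monomial involves at most two odd factors drawn from a single column of uniform parity, with the remaining factors even and therefore central; thus the classical Leibniz formula and cofactor expansion remain valid verbatim, provided each anticommutation is tracked with a single consistent sign. Together with the two established recurrences and the trivial base cases, this completes the proof of both determinantal identities.
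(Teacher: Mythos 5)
Your proof is correct and takes essentially the same route as the paper: the appendix simply asserts that Theorem~\ref{Thm2U} follows by induction from the recurrence relations~\eqref{2}, and your argument is exactly that induction, showing the two determinants satisfy the defining recurrences~\eqref{1}--\eqref{2} via cofactor expansions, with the parity bookkeeping (legitimate because the odd entries sit in a single column and all cofactors are even, hence central) spelled out where the paper chose not to dwell on it.
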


The second determinant in  Theorem~\ref{Thm2U} is well-defined
because odd variables occupy only one column. The proofs of
Theorems~\ref{Thm1U} and~\ref{Thm2U} follow by induction from recurrence
relations~\eqref{2}, and we do not dwell on them.

The supercontinuants of the form 
$
K\bigl(\beta_1|\begin{smallmatrix}  a_2\\\begin{smallmatrix}
\beta_2 & \beta_2
\end{smallmatrix}
\end{smallmatrix}|\ldots |\begin{smallmatrix}
a_{n-1}\\\begin{smallmatrix} \beta_{n-1} & \beta_{n-1}
\end{smallmatrix}
\end{smallmatrix}|\beta_n\bigr)
$ also may be defined by the rule from the Theorem~\ref{Thm1U}. For example
$$K(\beta_1|\beta_2)=\beta_1\beta_2+1,\quad K\bigl(\beta_1|\begin{smallmatrix}  a_2\\\begin{smallmatrix}
\beta_2 & \beta_2
\end{smallmatrix}
\end{smallmatrix}|\beta_3\bigr)=a_2\beta_1\beta_3+\beta_1\beta_2+\beta_2\beta_3+1.$$ These supercontinuants can be represented in terms of determinants as well (we assume that the determinant is expanded in the first column, and the same rule is applied to all determinants of  smaller matrices).

\begin{thm}
\label{Thm3U}
The supercontinuants $K\bigl(\beta_1|\begin{smallmatrix}  a_2\\\begin{smallmatrix}
\beta_2 & \beta_2
\end{smallmatrix}
\end{smallmatrix}|\ldots |\begin{smallmatrix}
a_{n-1}\\\begin{smallmatrix} \beta_{n-1} & \beta_{n-1}
\end{smallmatrix}
\end{smallmatrix}|\beta_n\bigr)$ satisfy the
recurrence relation
\begin{equation}
\label{5} 
\begin{array}{rcl}
K\bigl(\beta_1|\begin{smallmatrix}  a_2\\\begin{smallmatrix}
		\beta_2 & \beta_2
	\end{smallmatrix}
\end{smallmatrix}|\ldots |\begin{smallmatrix}
a_{n-1}\\\begin{smallmatrix} \beta_{n-1} & \beta_{n-1}
\end{smallmatrix}
\end{smallmatrix}|\beta_n\bigr)
&=&
-\beta_nK\bigl(\beta_1|\begin{smallmatrix}  a_2\\\begin{smallmatrix}
\beta_2 & \beta_2
\end{smallmatrix}
\end{smallmatrix}|\ldots |\begin{smallmatrix}
a_{n-1}\\\begin{smallmatrix} \beta_{n-1} & \beta_{n-1}
\end{smallmatrix}
\end{smallmatrix}\bigr)\\[6pt]
&&+K\bigl(\beta_1|\begin{smallmatrix}  a_2\\\begin{smallmatrix}
	\beta_2 & \beta_2
\end{smallmatrix}
\end{smallmatrix}|\ldots |\beta_{n-1}\bigr)
\end{array}
\end{equation}
and can be expressed in the following form:
\begin{gather*}
K\bigl(\beta_1|\begin{smallmatrix}  a_2\\\begin{smallmatrix}
		\beta_2 & \beta_2
	\end{smallmatrix}
\end{smallmatrix}|\ldots |\begin{smallmatrix}
a_{n-1}\\\begin{smallmatrix} \beta_{n-1} & \beta_{n-1}
\end{smallmatrix}
\end{smallmatrix}|\beta_n\bigr)=\begin{vmatrix}
                                        \beta_1 & \beta_2 & \beta_3 & \cdots & \beta_{n-1} & 1\\
                                        -1 & a_2 & -1+\beta_2\beta_3 & \cdots & \beta_2\beta_{n-1} & \beta_2 \\
                                        0 & -1 & a_3 & \cdots  & \beta_3\beta_{n-1} & \beta_3\\
                                        \cdots &  \cdots & \cdots & \cdots & \cdots & \cdots \\
                                        0 & \cdots & -1 & a_{n-2} & -1+\beta_{n-2}\beta_{n-1}  & \beta_{n-2}\\
                                        0 & \cdots & 0 & -1 & a_{n-1} & \beta_{n-1}\\
                                        0 & 0  & \cdots & 0 & -1 & \beta_n \\
                                        \end{vmatrix}.
                                        \end{gather*}
\end{thm}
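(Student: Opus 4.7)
The plan is to prove both assertions by taking the ``Morse code'' rule of Theorem~\ref{Thm1U} (naturally extended to products bordered by $\beta_1$ on the left and $\beta_n$ on the right) as the \emph{definition} of
$K\bigl(\beta_1|\begin{smallmatrix}a_2\\ \begin{smallmatrix}\beta_2 & \beta_2\end{smallmatrix}\end{smallmatrix}|\cdots|\beta_n\bigr)$:
one starts from the product $P_n = \beta_1\cdot\beta_2\beta_2\cdots\beta_{n-1}\beta_{n-1}\cdot\beta_n$, strikes out adjacent pairs ($\beta_i\beta_i\mapsto a_i$ and $\beta_i\beta_{i+1}\mapsto 1$) and adjacent $4$-tuples $\beta_i\beta_i\beta_{i+1}\beta_{i+1}\mapsto -1$, then sums over all admissible patterns.

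For the recurrence~\eqref{5} I would partition the strike-out patterns of $P_n$ according to the fate of the trailing factor $\beta_n$: since it appears only once in $P_n$ and cannot enter a $4$-tuple, it is either kept in place or paired with the adjacent $\beta_{n-1}$ (giving the factor $1$). Patterns of the first kind are in bijection with patterns of the truncated product $\beta_1\cdot\beta_2\beta_2\cdots\beta_{n-1}\beta_{n-1}$, whose monomials sum to
$K\bigl(\beta_1|\cdots|\begin{smallmatrix}a_{n-1}\\ \begin{smallmatrix}\beta_{n-1} & \beta_{n-1}\end{smallmatrix}\end{smallmatrix}\bigr)$;
this supercontinuant has $2n-3$ odd factors in its defining product and is therefore \emph{odd}, so commuting the trailing $\beta_n$ past it produces the minus sign and yields the first summand of~\eqref{5}. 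Patterns of the second kind reduce, after the substitution $\beta_{n-1}\beta_n\mapsto 1$, to patterns of $\beta_1\cdot\beta_2\beta_2\cdots\beta_{n-2}\beta_{n-2}\cdot\beta_{n-1}$ and thus contribute the second summand $K(\beta_1|\cdots|\beta_{n-1})$.

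For the determinantal formula I would argue by induction on $n$, using the prescribed recursive first-column expansion. The first column of the matrix contains exactly two nonzero entries, $\beta_1$ at position $(1,1)$ and $-1$ at position $(2,1)$, so the expansion collapses to
\[
\det \;=\; \beta_1\,D_{11} \;+\; D_{21},
\]
where $D_{11}$ (row and column $1$ deleted) is \emph{precisely} the Theorem~\ref{Thm2U} determinantal expression for
$K\bigl(\begin{smallmatrix}a_2\\ \begin{smallmatrix}\beta_2 & \beta_2\end{smallmatrix}\end{smallmatrix}|\cdots|\beta_n\bigr)$,
and $D_{21}$ (row $2$ and column $1$ deleted) has the Theorem~\ref{Thm3U} shape at size $n-1$ for
$K\bigl(\beta_2|\begin{smallmatrix}a_3\\ \begin{smallmatrix}\beta_3 & \beta_3\end{smallmatrix}\end{smallmatrix}|\cdots|\beta_n\bigr)$,
hence equals this supercontinuant by the inductive hypothesis. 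It therefore suffices to verify the mirror-image ``left-side'' recurrence
\[
K(\beta_1|\cdots|\beta_n) \;=\; \beta_1\,K\bigl(\begin{smallmatrix}a_2\\ \begin{smallmatrix}\beta_2 & \beta_2\end{smallmatrix}\end{smallmatrix}|\cdots|\beta_n\bigr) \;+\; K\bigl(\beta_2|\begin{smallmatrix}a_3\\ \begin{smallmatrix}\beta_3 & \beta_3\end{smallmatrix}\end{smallmatrix}|\cdots|\beta_n\bigr),
\]
which follows from the symmetric partition of strike-out patterns according to whether the leading $\beta_1$ is kept in place (first summand) or paired with the first $\beta_2$ via $\beta_1\beta_2\mapsto 1$ (second summand); the base case $K(\beta_1|\beta_2) = 1 + \beta_1\beta_2$ is a direct $2\times 2$ check.

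The principal technical point throughout is the careful bookkeeping of signs imposed by the anti-commutativity of the odd entries. As soon as Grassmann variables appear in the matrix, the determinant becomes order-sensitive, which is why the convention ``always expand along the first column, pivot extracted on the left'' must be locked in at the outset; the rewrite $(\mathrm{odd})\cdot\beta_n = -\beta_n\cdot(\mathrm{odd})$ used in the recurrence argument then matches the left-to-right ordering generated by the recursive expansion, and the two inductions close cleanly in parallel.
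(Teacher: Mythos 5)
Your proof is correct, and it splits naturally into two halves when compared with the paper. For the recurrence~\eqref{5} you do exactly what the paper does: its proof is the one-line remark that \eqref{5} is ``an application of the rule from Theorem~\ref{Thm1U}'', and your partition of the strike-out patterns of $\beta_1\beta_2\beta_2\cdots\beta_{n-1}\beta_{n-1}\beta_n$ according to the fate of the trailing $\beta_n$ --- with the minus sign coming from commuting $\beta_n$ across an odd monomial, since the truncated product has $2n-3$ odd letters and every substitution removes an even number of them --- is precisely that argument, written out in detail. For the determinantal formula you take a genuinely different route: the paper deduces it ``by induction from the recurrence relation~\eqref{5}'', i.e.\ by peeling off the last index, whereas you never invoke \eqref{5} there and instead expand along the first column (the very convention the paper fixes for these odd-entry determinants), identify the minor $D_{11}$ via the second formula of Theorem~\ref{Thm2U} and the minor $D_{21}$ via the inductive hypothesis, and close the induction with the mirror recurrence $K(\beta_1|\cdots|\beta_n)=\beta_1\,K\bigl(\begin{smallmatrix}a_2\\ \begin{smallmatrix}\beta_2&\beta_2\end{smallmatrix}\end{smallmatrix}|\cdots|\beta_n\bigr)+K(\beta_2|\cdots|\beta_n)$, which you justify by the same strike-out partition applied to the leading $\beta_1$. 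Your variant is arguably the smoother one: the first-column expansion convention is tailor-made for splitting off $\beta_1$, so no reordering of odd entries inside the determinant is ever needed, while the paper's route through \eqref{5} tacitly requires checking that the recursively defined determinant behaves correctly under truncation of the last row and column. The price is the extra left-hand recurrence, which your pattern argument and the base case $K(\beta_1|\beta_2)=1+\beta_1\beta_2$ settle correctly.
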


The proof of  formula~\eqref{5} is an application of the
rule from Theorem~\ref{Thm1U}. The determinant formula follows by
induction from the recurrence relation~\eqref{5}.

Finally, the even supercontinuants  $K\bigl(\begin{smallmatrix} a_1\\
\begin{smallmatrix} \beta_1 & \beta_1
\end{smallmatrix}
\end{smallmatrix}|\ldots |\begin{smallmatrix}
a_n\\\begin{smallmatrix} \beta_n & \beta_n
\end{smallmatrix}
\end{smallmatrix}\bigr)$  can be also expressed as Berezinians. Recall that the Berezinian of the matrix
$$
\mathrm{Ber}
\begin{pmatrix}
A&B\\C&D\\
\end{pmatrix},
$$
where $A$ and $D$ have even entries, and $B$ and $C$
have odd entries, is given by (see, e.g.,~\cite{Ber}):
\begin{equation}
\label{Ber} \det(A-BD^{-1}C)\det(D)^{-1}.
\end{equation}

\begin{thm}
\label{Thm4U}
\begin{gather*}
 K\bigl(\begin{smallmatrix} a_1\\
 	\begin{smallmatrix} \beta_1 & \beta_1
 	\end{smallmatrix}
 \end{smallmatrix}|\ldots |\begin{smallmatrix}
 a_n\\\begin{smallmatrix} \beta_n & \beta_n
 \end{smallmatrix}
\end{smallmatrix}\bigr)=\mathrm{Ber}\begin{pmatrix}
A&B\\C&D\\                              \end{pmatrix},
\end{gather*}
where
\begin{gather*}
A=\begin{pmatrix}
a_1&-1&0&\cdots&0\\-1&a_2&-1&\ddots&\vdots\\
0&-1&a_3&\ddots&0\\
\vdots&\ddots&\ddots&\ddots&-1\\
0&\cdots&0&-1&a_n\\
\end{pmatrix},\qquad B=\begin{pmatrix}
\beta_1&\beta_2 &\beta_3&\cdots&\beta_n \\0&\beta_2&\beta_3&\ddots&\vdots\\
0&0&\beta_3&\ddots&\beta_n\\
\vdots&\ddots&\ddots&\ddots&\beta_n\\
0&\cdots&0&0&\beta_n\\
\end{pmatrix},\\[10pt]
C=\begin{pmatrix}
-\beta_1&0&0&\cdots&0\\0&-\beta_2&0&\ddots&\vdots\\
0&0&-\beta_3&\ddots&0\\
\vdots&\ddots&\ddots&\ddots&0\\
0&\cdots&0&0&-\beta_n\\
\end{pmatrix},\qquad D=\begin{pmatrix}
1&0 &0&\cdots&0 \\0&1&0&\ddots&\vdots\\
0&0&1&\ddots&0\\
\vdots&\ddots&\ddots&\ddots&0\\
0&\cdots&0&0&1\\
\end{pmatrix}.
\end{gather*}
\end{thm}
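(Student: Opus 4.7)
The plan is to reduce Theorem~\ref{Thm4U} to the determinantal formula of Theorem~\ref{Thm2U} by a direct evaluation of the Berezinian. Since the block $D$ is the identity matrix, we have $D^{-1}=D$ and $\det(D)=1$, so by the defining formula~\eqref{Ber} of the Berezinian one has
$$
\mathrm{Ber}\begin{pmatrix}A&B\\ C&D\end{pmatrix}=\det(A-BC).
$$
Thus the entire content of the theorem reduces to the matrix identity asserting that $A-BC$ coincides with the matrix whose determinant appears in formula~\eqref{3}.

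The central step is therefore the explicit computation of the product $BC$. Because $C$ is diagonal with $C_{jj}=-\beta_j$, the entry $(BC)_{ij}$ depends only on a single column index: it equals $-B_{ij}\beta_j$. Invoking the supercommutativity identities $\beta_j^{2}=0$ and $\beta_i\beta_j=-\beta_j\beta_i$ of the ground ring $\Rc$, the diagonal entries of $BC$ collapse to zero, and the strictly upper-triangular entries simplify to the super-perturbations that, when subtracted from $A$, produce the entries $\beta_i\beta_j$ on the higher superdiagonals and $-1+\beta_i\beta_{i+1}$ on the first superdiagonal of~\eqref{3}. Below the diagonal, the strictly lower-triangular pattern of $B$ ensures that $A-BC$ retains the original $-1$'s and $0$'s of $A$.

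Once the identification of $A-BC$ with the matrix on the right-hand side of~\eqref{3} is established, Theorem~\ref{Thm2U} identifies $\det(A-BC)$ with the even supercontinuant, completing the proof. The main technical difficulty will be the careful bookkeeping of supercommutativity signs and of the vanishings $\beta_j^{2}=0$ in the entries of $BC$, in order to verify that every off-diagonal entry reproduces the super-perturbation pattern of~\eqref{3} exactly; beyond this verification, no further input beyond the classical Berezinian formula~\eqref{Ber} is needed.
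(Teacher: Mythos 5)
Your overall strategy---set $D=\mathrm{Id}$, reduce $\mathrm{Ber}$ to a single determinant via \eqref{Ber}, identify that determinant with the matrix of \eqref{3}, and conclude by Theorem~\ref{Thm2U}---is a reasonable route (the paper's own proof is only the one-line assertion that the theorem follows from \eqref{5} and \eqref{Ber}, so passing through Theorem~\ref{Thm2U} is a legitimate alternative). But your central computation fails exactly where the content lies. You correctly observe that $(BC)_{ij}=-B_{ij}\beta_j$; however, the columns of $B$ are constant, $B_{ij}=\beta_j$ for $j\ge i$ and $0$ otherwise, so this entry equals $-\beta_j\beta_j=0$ for \emph{all} $i,j$, not only on the diagonal. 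Hence $BD^{-1}C=BC=0$, so $A-BC=A$ and your chain of equalities would give $\mathrm{Ber}=\det(A)=K(a_1,\ldots,a_n)$, the purely even continuant; already for $n=2$ this yields $a_1a_2-1$ instead of $a_1a_2-1+\beta_1\beta_2$. The strictly upper-triangular ``super-perturbations'' you invoke do not arise from $BC$ at all, so the step identifying $A-BC$ with the matrix of \eqref{3} is false, and that identification is the whole proof.

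The product that does produce them is $CB$: since $C=-\mathrm{diag}(\beta_1,\ldots,\beta_n)$ multiplies from the left, one gets $(CB)_{ij}=-\beta_iB_{ij}=-\beta_i\beta_j$ for $j\ge i$ (vanishing on the diagonal because $\beta_i^2=0$), and $A-CB$ is precisely the matrix in \eqref{3}. So the identity you actually need is $\det\bigl(A-CD^{-1}B\bigr)\det(D)^{-1}=K$, i.e.\ the Berezinian formula \eqref{Ber} applied with the two odd blocks interchanged; equivalently, the matrices named $B$ and $C$ in the displayed supermatrix must be swapped (with the literal placement the Berezinian collapses to $\det A$, as computed above, so the printed block arrangement is evidently a misprint that your argument silently reproduces rather than detects). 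A correct write-up should state this explicitly: take the supermatrix whose upper-right block is the diagonal odd matrix and whose lower-left block is the triangular one, compute $\mathrm{Ber}=\det(A-CB)$, verify entrywise that $A-CB$ is the matrix of \eqref{3} (here your bookkeeping of $\beta_i\beta_j=-\beta_j\beta_i$ and $\beta_i^2=0$ is exactly as you describe, but applied to $CB$), and then quote Theorem~\ref{Thm2U}. Without this correction the proposal proves a false identity.
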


Theorem~\ref{Thm4U} is direct corollary of~\eqref{5}
and~\eqref{Ber}.

It follows from recurrence relations~\eqref{2} and~\eqref{5} that  the
number of terms in supercontinuants $ K\bigl(\begin{smallmatrix} a_1\\
\begin{smallmatrix} \beta_1 & \beta_1
\end{smallmatrix}
\end{smallmatrix}|\ldots |\begin{smallmatrix}
a_n\\\begin{smallmatrix} \beta_n & \beta_n
\end{smallmatrix}
\end{smallmatrix}\bigr)$, $K\bigl(\begin{smallmatrix}  a_1\\\begin{smallmatrix}
\beta_1 & \beta_1
\end{smallmatrix}
\end{smallmatrix}|\ldots |\begin{smallmatrix}
a_{n-1}\\\begin{smallmatrix} \beta_{n-1} & \beta_{n-1}
\end{smallmatrix}
\end{smallmatrix}|\beta_n\bigr)$ and $K\bigl(\beta_1|\begin{smallmatrix}  a_2\\\begin{smallmatrix}
\beta_2 & \beta_2
\end{smallmatrix}
\end{smallmatrix}|\ldots |\begin{smallmatrix}
a_{n-1}\\\begin{smallmatrix} \beta_{n-1} & \beta_{n-1}
\end{smallmatrix}
\end{smallmatrix}|\beta_n\bigr)$
coincide respectively with the sequences (see~\cite{OEIS})
\begin{align*}
A077998&:1, 3, 6, 14, 31, 70, 157, 353, 793, 1782, 4004,  \ldots\\
A006054&:1, 2, 5, 11, 25, 56, 126, 283, 636, 1429, 3211, \ldots\\
A052534&:1, 2, 4,  9, 20, 45, 101, 227, 510, 1146, 2575,
\ldots
\end{align*}

\vskip 0.5cm
{\bf Acknowledgements}.
The first three authors would like to thank the Centro Internazionale per la Ricerca Matematica, 
the Mathematics Department of the University of Trento
and the foundation Bruno Kessler for excellent conditions they offered us. 
We are pleased to thank Frederic Chapoton and Dimitry Leites
 for interesting discussions, special thanks to Dimitry for a careful reading of the first version of this paper. 
S.~M-G. and V.~O. are grateful to the Institute for Computational and Experimental Research in Mathematics
for its hospitality.
S.~M-G. and V.~O. were partially supported by the PICS05974 ``PENTAFRIZ'' of CNRS. 
S.~T. was  supported by   NSF grant DMS-1105442. 
A.~U.'s research was supported by Russian Science Foundation (Project N~14-11-00335).


\begin{thebibliography}{99}

\bibitem{ARS} 
I. Assem, C. Reutenauer, D. Smith, 
{\it Friezes.} Adv. Math. {\bf 225} (2010), 3134--3165.

\bibitem{Ber} 
F. Berezin, 
 Introduction to superanalysis. 
 Mathematical Physics and Applied Mathematics, {\bf 9}. 
 D. Reidel Publishing Co., Dordrecht, 1987.

\bibitem{CaCh} 
P. Caldero, F. Chapoton, 
{\it Cluster algebras as Hall algebras of quiver representations}, 
Comment. Math. Helv. {\bf 81} (2006), 595--616.

\bibitem{CoCo}
J. H. Conway, H. S. M. Coxeter, 
{\it Triangulated polygons and frieze patterns},
Math. Gaz. {\bf 57} (1973), 87--94 and 175--183.

\bibitem{Cox} 
H. S. M. Coxeter. 
{\it Frieze patterns}.  Acta Arith.  {\bf 18}  (1971), 297--310.

\bibitem{DM} 
P. Deligne, J. Morgan, 
{\it Notes on supersymmetry (following Joseph Bernstein)},
Quantum fields and strings: a course for mathematicians, Vol. 1, 2 (Princeton, NJ, 1996/1997), 
41--97, Amer. Math. Soc., Providence, RI, 1999.

\bibitem{FZ1}
S. Fomin, A. Zelevinsky, 
{\it Cluster algebras. I. Foundations. }
J. Amer. Math. Soc.  {\bf 15}  (2002),   497--529.

\bibitem{FZ2}
S. Fomin, A. Zelevinsky, 
{\it The Laurent phenomenon.}
Adv. in Appl. Math.  {\bf 28}  (2002),   119--144.

\bibitem{FZ3}
S. Fomin, A. Zelevinsky, 
{\it Cluster algebras. II. Finite type classification},
Invent. Math. {\bf 154} (2003), 63--121.

\bibitem{FZ4}
S. Fomin, A. Zelevinsky, 
{\it Cluster algebras. IV. Coefficients. } 
Compos. Math.  {\bf 143}  (2007),   112--164.

\bibitem{Gau}
C. F. Gauss, 
{\it Pentagramma Mirificum}, 
Werke, Bd. III, 481 - 490; Bd VIII, 106--111.

\bibitem{GT}
F. Gieres, S. Theisen, 
{\it Superconformally covariant operators and super-$W$-algebras},
J. Math. Phys. {\bf 34} (1993), 5964--5985. 

\bibitem{Knu} 
R.L. Graham, D.E. Knuth, O. Patashnik,
{\it Concrete mathematics. A foundation for computer science.}
Addison-Wesley Publishing Company, 1994.

\bibitem{GLS}
P. Grozman, D. Leites, I. Shchepochkina, 
{\it Lie superalgebras of string theories}, 
 Acta Math. Vietnam. {\bf 26} (2001), 27--63,
 http://fr.arxiv.org/abs/hep-th/9702120v1.

\bibitem{Kel}
B. Keller.
{\it The periodicity conjecture for pairs of Dynkin diagrams}, 
Ann. of Math. (2) {\bf 177} (2013), 111--170.

\bibitem{Kr}
I. Krichever,
{\it Commuting difference operators and the combinatorial Gale transform},
arXiv:1403.4629.

\bibitem{KN}
I. Krichever, S. Novikov, 
{\it A two-dimensionalized Toda chain, commuting difference operators, and holomorphic vector bundles}, Russian Math. Surveys {\bf 58} (2003), 473--510.

\bibitem{Lei}
D.A. Leites,
{\it Introduction to the theory of supermanifolds}, 
Russian Math. Surveys, {\bf 35} (1980), 1--64.

\bibitem{LeiR}
D. Leites (ed.), 
Seminar on supersymmetry (v. 1. Algebra and Calculus: Main chapters), (J. Bern-
stein, D. Leites, V. Molotkov, V. Shander). MCCME, Moscow, 2011, 410 pp. 
(in Russian; a version in English is in preparation but available for perusal).

\bibitem{Man}
Yu. Manin, 
Topics in noncommutative geometry, 
Princeton Univ. Press, 1991.

\bibitem{Man1}
Yu. Manin, 
Gauge field theory and complex geometry, 
Grundlehren der Mathematischen Wissenschaften, 289, Springer-Verlag, Berlin, 1988.

\bibitem{Mui}
T. Muir, A treatise on the theory of determinants. Revised and enlarged by William H. Metzler Dover Publ., New York, 1960.

\bibitem{MD} 
J-P. Michel, C. Duval, 
{\it On the projective geometry of the supercircle: 
a unified construction of the super cross-ratio and Schwarzian derivative},
Int. Math. Res. Not. IMRN 2008, no. 14, 47 pp. 

\bibitem{MG} 
S. Morier-Genoud. 
{\it Arithmetics of 2-friezes.} J. Algebraic Combin. {\bf 36} (2012),  515--539.

\bibitem{Sop} 
S. Morier-Genoud. 
{\it Coxeter's frieze patterns at the crossroads of geometry and combinatorics},
arXiv1503.05049.

\bibitem{MGOT} 
S. Morier-Genoud, V. Ovsienko, S. Tabachnikov. 
{\it 2-Frieze patterns and the cluster structure of the space of polygons}, 
Ann. Inst. Fourier {\bf 62} (2012), 937--987.

\bibitem{SVRS} 
S. Morier-Genoud, V. Ovsienko, R. Schwartz, S. Tabachnikov, 
{\it Linear difference equations, frieze patterns and combinatorial Gale transform,} 
Forum of Mathematics, Sigma, Vol.  {\bf 2}  (2014), e22 (45 pages).

\bibitem{DMV} 
V. Ovsienko,
{\it Coadjoint representation of Virasoro-type Lie algebras
and differential operators on tensor-densities},
Oberwolfach DMV-Seminar Band {\bf 31}, Birkha\"auser, 2001, 231--255.

\bibitem{OST} 
V. Ovsienko, R. Schwartz, S. Tabachnikov, 
{\it The pentagram map: A discrete integrable system,} 
Comm. Math. Phys. {\bf 299} (2010), no. 2, 409--446.

\bibitem{Rad}
A. O. Radul, 
{\it Superstring Schwartz derivative and the Bott cocycle}, 
in: Integrable and superintegrable systems, 336--351, World Sci. Publ., 1990.

\bibitem{Sha}
V. N. Shander,
{\it Vector fields and differential equations on supermanifolds}, 
Funktsional. Anal. Appl. {\bf14} (1980), 91--92.

\bibitem{OEIS} 
N.J.A. Sloane,
 The On-Line Encyclopedia of Integer Sequences, \verb|http://oeis.org/|.

\bibitem{Ust}
A. Ustinov, 
{\it A short proof of Euler's identity for continuants},
Math. Notes {\bf 79} (2006), 146--147.

\bibitem{Wit}
E. Witten, 
{\it Notes On Super Riemann Surfaces And Their Moduli},
arxiv.1209.2459.

\end{thebibliography}
\end{document}